\newtheorem{theorem}{Theorem}[section]
\newtheorem{lemma}[theorem]{Lemma}
\newtheorem{proposition}[theorem]{Proposition}
\newcommand{\R}{\mathbb{R}}
\newcommand{\beq}{\begin{equation}}
\newcommand{\eeq}{\end{equation}}
\newcommand{\beqq}{\begin{equation*}}
\newcommand{\eeqq}{\end{equation*}}
\theoremstyle{definition}
\theoremstyle{remark}
\newtheorem{remark}[theorem]{Remark}
\numberwithin{equation}{section}
\numberwithin{equation}{section}
\begin{document}

\address{Chenjie Fan
\newline \indent AMSS, China/University of Chicago, U.S.A.\indent }
\email{fancj@amss.ac.cn}

\address{Zehua Zhao
\newline \indent  University of Maryland, U.S.A. }
\email{zzh@umd.edu}

\title[Decay estimates for nonlinear Schr\"odinger solutions]{Decay estimates for nonlinear Schr\"odinger equations}
\author{Chenjie Fan and Zehua Zhao}
\maketitle

\begin{abstract}
In this short note, we present some decay estimates for nonlinear solutions of 3d quintic, 3d cubic  NLS, and 2d quintic NLS (nonlinear Schr\"odinger equations).
\end{abstract}
\bigskip

\noindent \textbf{Keywords}: Nonlinear Schr\"odinger equation, decay estimate
\bigskip

\noindent \textbf{Mathematics Subject Classification (2010)} Primary: 35Q55; Secondary: 35R01, 58J50, 47A40.

\section{Introduction}
\subsection{Statement of main results}
Dispersive estimates play a fundamental role in the study of nonlinear dispersive equations. One typical model of nonlinear dispersive equations is NLS.  It is well known, see for example, \cite{cazenave2003semilinear}, \cite{tao2006nonlinear},  that for the linear propagator $e^{it\Delta}$ of Schr\"odinger equation in $\R^{d}$, one has 
\begin{equation}\label{eq: dispersive}
\|e^{it\Delta}u_{0}\|_{L_{x}^{\infty}}\lesssim \|u_{0}\|_{L_{x}^{1}}t^{-\frac{d}{2}}.
\end{equation}

On the other hand, in the field of defocusing NLS, there are many scattering type results which state that the solution $u$ to the defocusing nonlinear equations, asymptotically behaves like a linear solution. This means one can find $u^{+}$, so that in certain Sobolev space one has 
\begin{equation}
\|u(t)-e^{it\Delta}u^{+}\|_{\dot{H}_{x}^{s}}\rightarrow 0, \text{ as } t\rightarrow \infty.
\end{equation}

One natural question may be that in what sense one can recover estimate \eqref{eq: dispersive} for solutions to the nonlinear equations.

We start with defocusing energy critical NLS in $\R^{3}$ with relatively nice initial data,
\begin{equation}\label{eq: quinticmodel}
\aligned
\begin{cases}
\left(i\partial_t+ \Delta_{\mathbb{R}^{3}} \right) u=   |u|^{4} u, \\
u(0,x) = u_{0}(x) \in H^{3}\cap L^{1},
\end{cases}
\endaligned
\end{equation}
and prove that 
\begin{theorem}\label{main}
Let $u$ solves \eqref{eq: quinticmodel} with initial data $u_{0}$, which is in  $H^{3}\cap L^{1}$.
Then, there exists a constant $C_{u_{0}}$ depending  on $u_{0}$, such that
\begin{equation}
    ||u(t,x)||_{L_x^{\infty}}\leq C_{u_{0}} t^{-\frac{3}{2}}.
\end{equation}
\end{theorem}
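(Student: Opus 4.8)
The plan is to run the Duhamel formula
\[
u(t)=e^{it\Delta}u_{0}-i\int_{0}^{t}e^{i(t-s)\Delta}\big(|u|^{4}u\big)(s)\,ds ,
\]
split the nonlinear term at time $t/2$, and feed in the standard global theory for \eqref{eq: quinticmodel}: global well-posedness and scattering, hence conservation of mass $\|u(t)\|_{L^{2}_{x}}$, the global-in-time Strichartz bounds $\|\langle\nabla\rangle u\|_{L^{q}_{t}L^{r}_{x}(\mathbb{R}_{+}\times\mathbb{R}^{3})}\lesssim_{u_{0}}1$ on every admissible pair, persistence of regularity $\|u\|_{L^{\infty}_{t}H^{3}_{x}}\lesssim_{u_{0}}1$ (so $\|u(t)\|_{L^{\infty}_{x}}\lesssim_{u_{0}}1$ for all $t\ge 0$, since $H^{3}(\mathbb{R}^{3})\hookrightarrow L^{\infty}$), and finiteness of the scattering norm $\|u\|_{L^{10}_{t,x}(\mathbb{R}_{+}\times\mathbb{R}^{3})}<\infty$. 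From the Strichartz bounds I would record that, interpolating in time the admissible-pair estimate $\|u\|_{L^{20/9}_{t}L^{5}_{x}}\lesssim_{u_{0}}1$ against $\|u\|_{L^{\infty}_{t}L^{5}_{x}}\lesssim\|u\|_{L^{\infty}_{t}H^{1}_{x}}\lesssim_{u_{0}}1$, one gets $\|u\|_{L^{5}_{t,x}(\mathbb{R}_{+}\times\mathbb{R}^{3})}\lesssim_{u_{0}}1$; from the scattering norm, that $\|u\|_{L^{10}_{t,x}([T_{0},\infty)\times\mathbb{R}^{3})}\le\varepsilon$ once $T_{0}=T_{0}(u_{0})\ge 1$ is chosen large, with the small parameter $\varepsilon$ fixed at the very end. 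Since $\|u(t)\|_{L^{\infty}_{x}}\lesssim_{u_{0}}1$ and $1\le T_{0}^{3/2}t^{-3/2}$ for $t\in(0,T_{0}]$, it then suffices to prove $\|u(t)\|_{L^{\infty}_{x}}\lesssim_{u_{0}}t^{-3/2}$ for $t\ge T_{0}$.

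For $t\ge 2T_{0}$ I would write $u(t)=e^{it\Delta}u_{0}+D_{\mathrm{far}}(t)+D_{\mathrm{near}}(t)$, with $D_{\mathrm{far}}=-i\int_{0}^{t/2}e^{i(t-s)\Delta}(|u|^{4}u)\,ds$ and $D_{\mathrm{near}}=-i\int_{t/2}^{t}e^{i(t-s)\Delta}(|u|^{4}u)\,ds$. The linear term is handled directly by \eqref{eq: dispersive}: $\|e^{it\Delta}u_{0}\|_{L^{\infty}_{x}}\lesssim\|u_{0}\|_{L^{1}_{x}}t^{-3/2}$. On $[0,t/2]$ one has $t-s\ge t/2$, so \eqref{eq: dispersive} together with $\||u|^{4}u\|_{L^{1}_{x}}=\|u\|_{L^{5}_{x}}^{5}$ gives
\[
\|D_{\mathrm{far}}(t)\|_{L^{\infty}_{x}}\lesssim\int_{0}^{t/2}(t-s)^{-3/2}\|u(s)\|_{L^{5}_{x}}^{5}\,ds\lesssim t^{-3/2}\|u\|_{L^{5}_{t,x}}^{5}\lesssim_{u_{0}}t^{-3/2}.
\]
Up to this point only the linear dispersive estimate and the finiteness of a global space-time norm have been used.

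The main obstacle is $D_{\mathrm{near}}$, where the kernel $(t-s)^{-3/2}$ is not integrable at $s=t$. I would run a bootstrap on $N(T):=\sup_{T_{0}\le t\le T}t^{3/2}\|u(t)\|_{L^{\infty}_{x}}$, which is finite, nondecreasing and continuous in $T$ (as $u\in C_{t}L^{\infty}_{x}$), with $N(T_{0})\lesssim_{u_{0}}1$; for $t\in[T_{0},2T_{0}]$ the bound $t^{3/2}\|u(t)\|_{L^{\infty}_{x}}\lesssim_{u_{0}}1$ is immediate from persistence of regularity. For $t\ge 2T_{0}$ I would estimate $D_{\mathrm{near}}$ on $[t/2,t]$ by trading the $L^{1}\to L^{\infty}$ bound for the weaker estimate $\|e^{i\tau\Delta}g\|_{L^{\infty}_{x}}\lesssim\tau^{-\gamma}\|g\|_{L^{p'}_{x}}$, $\gamma=3(\tfrac12-\tfrac1p)$, with $p$ slightly below $5$ (so $0<\gamma<\tfrac{9}{10}$) and a companion exponent $a_{1}\in(2,6)$ fixed by $\tfrac1{p'}=\tfrac4{a_{1}}+\tfrac1{10}$. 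Then H\"older in $x$ gives $\||u|^{5}\|_{L^{p'}_{x}}\le\|u\|_{L^{a_{1}}_{x}}^{4}\|u\|_{L^{10}_{x}}$; interpolating $L^{a_{1}}$ between the conserved $L^{2}$ norm and $L^{\infty}$ and inserting the bootstrap hypothesis (legitimate since $[t/2,t]\subseteq[T_{0},T]$) gives $\|u(s)\|_{L^{a_{1}}_{x}}\lesssim_{u_{0}}(N(T)s^{-3/2})^{1-2/a_{1}}$ on $[t/2,t]$; and H\"older in $s$ gives
\[
\int_{t/2}^{t}(t-s)^{-\gamma}\|u(s)\|_{L^{10}_{x}}\,ds\le\big\|(t-\cdot)^{-\gamma}\big\|_{L^{10/9}_{s}([t/2,t])}\|u\|_{L^{10}_{t,x}([t/2,t])}\lesssim t^{\,9/10-\gamma}\varepsilon ,
\]
the first factor being finite precisely because $\tfrac{10}{9}\gamma<1$, the second being $\le\varepsilon$ since $[t/2,t]\subset[T_{0},\infty)$. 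Multiplying the three bounds—with the exponent $a_{1}$ dropping out of the resulting power of $t$—I expect
\[
\|D_{\mathrm{near}}(t)\|_{L^{\infty}_{x}}\lesssim_{u_{0}}\varepsilon\,N(T)^{\alpha}\,t^{-3/2-\delta}\qquad\big(\delta>0,\ \alpha=4(1-\tfrac{2}{a_{1}})\in(2,3)\big),
\]
hence $t^{3/2}\|D_{\mathrm{near}}(t)\|_{L^{\infty}_{x}}\lesssim_{u_{0}}\varepsilon N(T)^{\alpha}$.

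Collecting the three contributions gives $N(T)\le C_{u_{0}}\big(1+\varepsilon N(T)^{\alpha}\big)$ for all $T\ge T_{0}$. Because $\alpha>1$ this does not close on its own, but $\varepsilon$ may be made as small as we please by enlarging $T_{0}=T_{0}(u_{0})$, so I would fix $\varepsilon$ with $C_{u_{0}}\varepsilon(2C_{u_{0}})^{\alpha-1}$ sufficiently small; then $N(T_{0})\le C_{u_{0}}$ together with the continuity of $N$ forces $N(T)\le 2C_{u_{0}}$ for every $T\ge T_{0}$ by the usual continuity argument. Letting $T\to\infty$ yields $\|u(t)\|_{L^{\infty}_{x}}\le 2C_{u_{0}}t^{-3/2}$ for $t\ge T_{0}$, which with the trivial bound on $(0,T_{0}]$ proves the theorem. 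The one genuinely delicate step is the treatment of $D_{\mathrm{near}}$: one pays a little decay to kill the non-integrable singularity of the dispersive kernel, buys it back from the decay of intermediate Lebesgue norms—interpolated from the very $L^{\infty}$ decay being bootstrapped, against the conserved mass—and, since this makes the nonlinearity reappear with a superlinear power of the bootstrap constant, relies on the smallness of the scattering norm near time infinity to close the loop.
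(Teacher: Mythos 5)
Your overall architecture (Duhamel split, bootstrap on $N(T)=\sup t^{3/2}\|u(t)\|_{L^\infty_x}$, smallness of the scattering norm near $t=\infty$ to close a superlinear bootstrap) matches the paper, and your treatment of the linear term and of $D_{\mathrm{far}}$ is correct — indeed the observation that $\|u\|_{L^5_{t,x}(\mathbb{R}_+\times\mathbb{R}^3)}\lesssim_{u_0}1$ lets you dispose of all of $[0,t/2]$ in one stroke, where the paper splits $[0,t-M]$ into two regions and tunes a large parameter $M$. But the estimate of $D_{\mathrm{near}}$, which you correctly identify as the crux, rests on a false inequality: there is no fixed-time dispersive estimate of the form $\|e^{i\tau\Delta}g\|_{L^\infty_x}\lesssim \tau^{-\gamma}\|g\|_{L^{p'}_x}$ for $p'>1$. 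Interpolating $L^1\to L^\infty$ against $L^2\to L^2$ yields $L^{p'}\to L^p$ (the \emph{dual} exponent), not $L^{p'}\to L^\infty$; and $e^{i\tau\Delta}$ is simply not bounded from $L^{p'}$ to $L^\infty$ for any $p'>1$ — test it on $g(y)=e^{-i|y|^2/4\tau}(1+|y|)^{-3}$, which lies in every $L^{p'}$ with $p'>1$ but for which $e^{i\tau\Delta}g(0)=c\,\tau^{-3/2}\int(1+|y|)^{-3}\,dy=\infty$. (Your exponent $\gamma=3(\tfrac12-\tfrac1p)$ is not even scaling-consistent with an $L^{p'}\to L^\infty$ bound.) Since the whole point of the near-region is that the $L^1\to L^\infty$ kernel $(t-s)^{-3/2}$ is non-integrable at $s=t$, this is exactly the step that cannot be waved through, and as written your bound on $\|D_{\mathrm{near}}(t)\|_{L^\infty_x}$ does not exist.

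The paper circumvents this by \emph{not using the dispersive estimate at all} on the interval adjacent to $t$: it takes that interval of fixed length $M$, bounds $F_3=\int_{t-M}^t e^{i(t-s)\Delta}(|u|^4u)\,ds$ in $L^2_x$ (unitarity of $e^{i(t-s)\Delta}$, H\"older putting $\tfrac{15}{8}$ powers of $u$ in $L^{10}_x$ so the small $L^{10}_{t,x}$ tail norm $\delta$ appears, plus the bootstrap hypothesis and mass) and separately in $H^3_x$ (algebra property, giving $MM_1^5$), and then interpolates to $L^\infty_x$ via the Gagliardo--Nirenberg-type Lemma \ref{lem: ele}, $\|f\|_{L^\infty}\lesssim\|f\|_{L^2}^{2/5}\|f\|_{H^3}^{3/5}$. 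The $L^2$ bound carries enough powers of $t^{-3/2}$ and of $\delta$ that after interpolation one still gets $\tfrac{1}{10}C_{u_0}t^{-3/2}$. If you want to stay closer to your own scheme, the legitimate replacement for your false estimate is $\|e^{i\tau\Delta}g\|_{L^\infty}\lesssim\tau^{-3(1/2-1/p)}\|\langle\nabla\rangle^{s}g\|_{L^{p'}}$ with $s>3/p$ (dispersive estimate composed with Sobolev embedding), which costs derivatives of $|u|^4u$ that you must then absorb using the $H^3$ bound — morally the same repair as the paper's. Either way, some use of the extra regularity of $u$ near $s=t$ is unavoidable, and your proposal as it stands is missing it.
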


\begin{remark}
 One may be able to lower the regularity  of initial data a little bit by sharping the analysis in this article, but the method in this article cannot handle $H^{1}\cap L^{1}$ data.  More importantly, this constant $C_{u_{0}}$, does not only depend on the $L_{x}^{1}$ and $H^{3}$ norm of the data, but also depends on the profile of initial data $u_{0}$. On the other hand, we don't need our initial data in any weighted $L_{x}^{2}$ space.
\end{remark}

\begin{remark}
One may ask similar questions for other NLS models. There is a special case for defocusing mass critical NLS which has a very quick affirmative answer, at least for $d\leq 3$ when the nonlinearity is relatively regular. The observation is: let $u$ be a solution to defocusing mass critical NLS with Schwarz initial data in $\R^{d}$,
\begin{equation}
iu_{t}+\Delta u=|u|^{\frac{4}{d}}u, \quad, u(0,x)=u_{0}.
\end{equation}

 then via pseudo-conformal transformation, one can write $u$ as 
\begin{equation}
u(t,x)=\frac{1}{t^{d/2}}\bar{v}(\frac{1}{t}, \frac{x}{t})e^{i|x|^{2}/4t}
\end{equation}
where $v$ also solves the mass critical NLS global in time, with smooth initial data, and in particular $\|v\|_{L_{x}^{\infty}}\lesssim 1$.
Note that in this case, not only does  $C_{u_{0}}$ exists so that $\
|u(t,x)\|_{L^{\infty}}\leq C_{u_{0}}t^{-3/2}$, but also this $C_{u_{0}}$ only depends on certain norms of $u_{0}$, rather than its profile.
\end{remark}

We also obtain similar results for 3d cubic NLS and 2d quintic NLS.   
Consider 
\begin{equation}\label{eq: cubic}
\aligned
\begin{cases}
\left(i\partial_t+ \Delta_{\mathbb{R}^{3}} \right) u=   |u|^{2} u, \\
u(0,x) = u_{0}(x) \in H^{4}\cap L^{1},
\end{cases}
\endaligned
\end{equation}
we have
\begin{theorem}\label{main2}
Let $u$ solves \eqref{eq: cubic} with initial data $u_{0}$, which is in  $H^{4}\cap L^{1}$,
Then there exists a constant $C_{u_{0}}$, depending $u_{0}$, so that
\begin{equation}
    ||u(t,x)||_{L_x^{\infty}}\leq C_{u_{0}} t^{-\frac{3}{2}}.
\end{equation}
\end{theorem}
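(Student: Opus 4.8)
The plan is to prove the bound by a bootstrap on the weighted quantity
$N(T):=\sup_{1\le t\le T} t^{3/2}\|u(t)\|_{L^\infty_x}$, using as input the global
theory for the defocusing cubic equation. First I would record the ingredients: by
global well-posedness, scattering, and persistence of regularity together with the
global Strichartz bounds, $u$ is global with $\sup_{t}\|u(t)\|_{H^4}\le C_{u_0}$, $u$
lies in all global space-time Strichartz norms, and in particular
$\|u\|_{L^q_t([T_*,\infty);L^r_x)}\to 0$ as $T_*\to\infty$ for every relevant pair
$(q,r)$ --- with the crucial caveat that the rate of this vanishing depends on the
\emph{profile} of $u_0$, not merely on its norms. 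Since $H^2(\R^3)\hookrightarrow
L^\infty_x$, the desired estimate is trivial for $t\le 1$, and $N(T)$ is a priori finite
for every $T$. From Duhamel's formula $u(t)=e^{it\Delta}u_0-i\int_0^t
e^{i(t-s)\Delta}(|u|^2u)(s)\,ds$ the linear term obeys $\|e^{it\Delta}u_0\|_{L^\infty_x}
\lesssim\|u_0\|_{L^1_x}t^{-3/2}$ by \eqref{eq: dispersive}; everything reduces to bounding
the Duhamel integral, for $t\ge 2T_*$, by the right-hand side of the claimed inequality plus a small multiple of $N(T)t^{-3/2}$.

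The heart of the argument is to split the Duhamel integral into three regions. On
$[0,t/2]$ one has $|t-s|\ge t/2$, so \eqref{eq: dispersive} gives a contribution
$\lesssim t^{-3/2}\int_0^{t/2}\|u(s)\|_{L^3_x}^3\,ds$; invoking mass conservation and
$\|u\|_{L^3_x}^3\le\|u_0\|_{L^2_x}^2\|u\|_{L^\infty_x}$, the part of this integral over
$[0,T_*]$ is a (large, $T_*$-dependent) constant coming from $\|u\|_{L^\infty_x}\le
\|u\|_{H^2_x}$, while the part over $[T_*,t/2]$ is $\le\|u_0\|_{L^2_x}^2 N(T)\int_{T_*}^\infty
s^{-3/2}\,ds\lesssim T_*^{-1/2}\|u_0\|_{L^2_x}^2\,N(T)$, i.e.\ a small multiple of $N(T)$ once
$T_*$ is large. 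On an intermediate region $[t/2,t-R]$, with $R$ a second large parameter,
I would again use \eqref{eq: dispersive} --- now with no singularity --- together with
$\|u(s)\|_{L^\infty_x}\lesssim N(T)t^{-3/2}$ for $s\ge t/2$ and $\int_R^{t/2}\rho^{-3/2}\,d\rho
\lesssim R^{-1/2}$, producing $\lesssim R^{-1/2}\|u_0\|_{L^2_x}^2\,N(T)\,t^{-3/2}$.

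The delicate region is the bounded interval $[t-R,t]$ near the endpoint, where the
dispersive estimate to $L^\infty_x$ fails because its kernel $|t-s|^{-3/2}$ is not locally
integrable. I would instead estimate $w(t):=\int_{t-R}^t e^{i(t-s)\Delta}(|u|^2u)(s)\,ds$ by
the Gagliardo--Nirenberg inequality $\|w(t)\|_{L^\infty_x}\lesssim\|w(t)\|_{\dot H^2_x}^{2/3}
\|w(t)\|_{L^3_x}^{1/3}$. For the first factor, $\dot H^2$-unitarity of the propagator and a
Moser-type bound $\||u|^2u\|_{\dot H^2_x}\lesssim C_{u_0}^2\|u\|_{L^\infty_x}$ give
$\|w(t)\|_{\dot H^2_x}\lesssim C_{u_0}^2 R\,N(T)\,t^{-3/2}$; this is crude but linear in
$N(T)$, and because the time integration is over a \emph{bounded} interval no power of $t$
is lost. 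For the second factor I would use the dispersive estimate in the form
$\|e^{i\tau\Delta}g\|_{L^3_x}\lesssim|\tau|^{-1/2}\|g\|_{L^{3/2}_x}$ --- whose kernel
\emph{is} integrable near $\tau=0$ --- with $\||u|^2u\|_{L^{3/2}_x}\le\|u\|_{L^\infty_x}
\|u\|_{L^3_x}^2$ and H\"older in $s$; since $[t-R,t]$ is bounded, any norm
$\|u\|_{L^{2m}_t([T_*,\infty);L^3_x)}$ with $m>2$ (finite globally by interpolating the
Strichartz bound $u\in L^4_tL^3_x$ against $u\in L^\infty_tL^2_x$, and vanishing as
$T_*\to\infty$) can be placed on the two surviving $u$-factors, giving $\|w(t)\|_{L^3_x}
\lesssim_R\delta(T_*)^2\,N(T)\,t^{-3/2}$ with $\delta(T_*)\to 0$. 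Combining, $\|w(t)\|_{L^\infty_x}
\lesssim C(u_0,R)\,\delta(T_*)^{2/3}\,N(T)\,t^{-3/2}$, again a small multiple of $N(T)t^{-3/2}$
once $T_*$ is large.

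Collecting the four contributions yields an inequality of the shape $N(T)\le A(u_0,R,T_*)
+\theta(u_0,R,T_*)\,N(T)$ with $A$ finite and $\theta=C\|u_0\|_{L^2_x}^2R^{-1/2}
+C\|u_0\|_{L^2_x}^2T_*^{-1/2}+C(u_0,R)\delta(T_*)^{2/3}$. I would then fix $R=R(u_0)$ large so
that the first term of $\theta$ is at most $\tfrac16$, then fix $T_*=T_*(u_0)\ge R$ large so
that the remaining two terms are each at most $\tfrac16$; since $N(T)<\infty$ the term
$\theta N(T)$ can be absorbed, giving $N(T)\le 2A$ uniformly in $T$, which is the theorem.
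I expect the main obstacle to be precisely this closing step: the problem is non-perturbative
(no smallness of the data), so the required smallness can only be harvested from the vanishing
of the tail space-time norms of $u$, and pinning down when that vanishing takes effect forces
$T_*$ --- hence the constant $C_{u_0}$ --- to depend on the profile of $u_0$. A secondary
difficulty, already visible above, is the endpoint region $[t-R,t]$: one cannot work in
$L^\infty_x$ there directly, and the interpolation trick (crude $\dot H^2$ bound against a
dispersive $L^3$ bound, with the $L^3$ factor carrying the smallness and the time window kept
bounded so that no powers of $t$ are wasted) is what makes the estimate close.
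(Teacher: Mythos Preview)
Your bootstrap scheme matches the paper's: reduce to an inequality $N(T)\le A+\theta\,N(T)$ with $\theta<1$ by a three--region Duhamel decomposition, harvesting the needed smallness from the vanishing of tail space--time norms. The substantive difference is in how the endpoint window is handled. The paper controls $F_3=\int_{t-M}^t$ via the three--norm interpolation of Lemma~\ref{lem: ele2}, $\|f\|_{L^\infty}\lesssim\|f\|_{L^2}^{2/5}\|\nabla f\|_{L^2}^{6/25}\|f\|_{H^4}^{9/25}$; the extra $\dot H^1$ factor is precisely what forces the $H^4$ hypothesis, since the two--norm version $\|f\|_{L^\infty}\lesssim\|f\|_{L^2}^{2/5}\|f\|_{H^3}^{3/5}$ fails to recover the full $t^{-3/2}$ and a single power of $C_{u_0}$ for the cubic nonlinearity. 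Your route instead combines the Gagliardo--Nirenberg inequality $\|w\|_{L^\infty}\lesssim\|w\|_{\dot H^2}^{2/3}\|w\|_{L^3}^{1/3}$ with the locally integrable dispersive bound $L^{3/2}_x\to L^3_x$; the smallness sits entirely on the $L^3$ factor, and the crude $\dot H^2$ factor needs only $\|u\|_{H^2}$. In particular your argument actually closes with $u_0\in H^2\cap L^1$, an improvement over the paper's stated $H^4$ assumption. One minor correction: to place $u$ in $L^{2m}_tL^3_x$ for $m>2$ you should interpolate the Strichartz norm $L^4_tL^3_x$ against $L^\infty_tL^3_x$ (available from $H^{1/2}\hookrightarrow L^3$ and persistence of regularity), not against $L^\infty_tL^2_x$ as written, since the latter interpolation does not keep the spatial exponent at $3$; the conclusion is unaffected.
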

\begin{remark}
Again, this $C_{u_{0}}$ depends on the profile of the initial data. \cite{hayashi1986c} also gives decay for this model, with different assumptions. In some sense, they need more integrability in space but less regularity of the data. It should also be noted, by conjugate pseudo-conformal symmetry and energy conservation law, in the same spirit of \cite{hayashi1986c}, one can directly get  time decay for certain $L^{p}$ norm. We thank Jason Murphy for helpful discussion on this point.
\end{remark}
Also consider 
\begin{equation}\label{eq: 2dquintic}
\aligned
\begin{cases}
\left(i\partial_t+ \Delta_{\mathbb{R}^{2}} \right) u=   |u|^{4} u, \\
u(0,x) = u_{0}(x) \in H^{3}\cap L^{1},
\end{cases}
\endaligned
\end{equation}
we have
\begin{theorem}\label{main3}
Let $u$ solves \eqref{eq: 2dquintic} with initial data $u_{0}$, which is in  $H^{3}\cap L^{1}$,
Then there exists a constant $C_{u_{0}}$, depending $u_{0}$, so that
\begin{equation}
    ||u(t,x)||_{L_x^{\infty}}\leq C_{u_{0}} t^{-1}.
\end{equation}
\end{theorem}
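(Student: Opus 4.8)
The plan is to run a Duhamel bootstrap for the weighted quantity
\[
\delta(t):=\sup_{0\le s\le t}(1+s)\,\norm{u(s)}_{L_x^\infty},
\]
feeding in the following standard facts about defocusing energy--subcritical NLS with smooth data (cf.\ the references in the Introduction). First, $u$ is global, scatters in $H^1$, and by persistence of regularity $\norm{u}_{L_t^\infty H_x^3}\le C_{u_0}$; since $H^2\hook L^\infty$ in $\R^2$ this gives $\norm{u}_{L_t^\infty L_x^\infty}\le C_{u_0}$. Second, fixing a small $\epsilon\in(0,1)$, all admissible Strichartz norms of $u$ in $W^{1+\epsilon,p}_x$ over $\R\times\R^2$ are finite; interpolating these with the preceding $L^\infty_{t,x}$ bound shows $\norm{u}_{L_t^aW^{1+\epsilon,b}_x(\R\times\R^2)}\le C_{u_0}$ for the pairs $(a,b)=(4q',4r')$ with $(q,r)$ admissible (for which $\tfrac1a+\tfrac1b=\tfrac38$), and since these norms are finite over all of $\R\times\R^2$ their tails vanish: $\norm{u}_{L_t^aW^{1+\epsilon,b}_x([t/2,t]\times\R^2)}\to 0$ as $t\to\infty$. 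Third, the two--dimensional dispersive estimate \eqref{eq: dispersive} with $d=2$. It suffices to treat $t\ge 2$, since for $t\le 2$ one has $\norm{u(t)}_{L_x^\infty}\le C_{u_0}\le 2C_{u_0}t^{-1}$.

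First I would split the Duhamel formula at $s=t/2$,
\[
u(t)=e^{it\Delta}u_0-i\int_0^{t/2}e^{i(t-s)\Delta}(|u|^4u)(s)\,ds-i\int_{t/2}^{t}e^{i(t-s)\Delta}(|u|^4u)(s)\,ds=:\mathrm I+\mathrm{II}+\mathrm{III}.
\]
By \eqref{eq: dispersive}, $\norm{\mathrm I}_{L_x^\infty}\le t^{-1}\norm{u_0}_{L_x^1}$. Since $t-s\ge t/2$ on $[0,t/2]$, \eqref{eq: dispersive} also gives $\norm{\mathrm{II}}_{L_x^\infty}\lesssim t^{-1}\int_0^{t/2}\norm{(|u|^4u)(s)}_{L_x^1}\,ds=t^{-1}\int_0^{t/2}\norm{u(s)}_{L_x^5}^5\,ds$, and using $\norm{u}_{L_x^5}\le\norm{u}_{L_x^4}^{4/5}\norm{u}_{L_x^\infty}^{1/5}$ together with the first input and the finiteness of the mass--Strichartz norm $\norm{u}_{L_{t,x}^4(\R\times\R^2)}\le C_{u_0}$ bounds the integral by $\norm{u}_{L_t^\infty L_x^\infty}\norm{u}_{L_{t,x}^4}^4\le C_{u_0}$. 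Hence $\norm{\mathrm I}_{L_x^\infty}+\norm{\mathrm{II}}_{L_x^\infty}\le C_{u_0}t^{-1}$.

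The term $\mathrm{III}$ is the crux: there $t-s$ may be small, so \eqref{eq: dispersive} is too singular, and instead I would estimate it through an inhomogeneous Strichartz estimate. Using $H_x^{1+\epsilon}\hook L_x^\infty$ in $\R^2$ and the energy--type inhomogeneous Strichartz estimate (bounding the $H_x^{1+\epsilon}$ norm at time $t$ of a Duhamel term on $[t/2,t]$ by the $L_s^{q'}W^{1+\epsilon,r'}_x$ norm of its forcing, for any admissible pair $(q,r)$), followed by the fractional Leibniz rule keeping one undifferentiated factor of $u$ in $L_x^\infty$ and the other four in $W^{1+\epsilon,4r'}_x$,
\[
\norm{\mathrm{III}}_{L_x^\infty}\lesssim\norm{\mathrm{III}}_{H_x^{1+\epsilon}}\lesssim\big\||u|^4u\big\|_{L_s^{q'}W^{1+\epsilon,r'}_x([t/2,t]\times\R^2)}\lesssim\Big\|\,\norm{u(s)}_{L_x^\infty}\,\norm{u(s)}_{W^{1+\epsilon,4r'}_x}^4\,\Big\|_{L_s^{q'}([t/2,t])}.
\]
Bounding the $L_x^\infty$ factor pointwise on $[t/2,t]$ by $\delta(t)(1+s)^{-1}\lesssim \delta(t)(1+t)^{-1}$, pulling it out, and applying Hölder in $s$, the right side is controlled by $\tfrac{\delta(t)}{1+t}\,\norm{u}_{L_s^{4q'}W^{1+\epsilon,4r'}_x([t/2,t]\times\R^2)}^4=\tfrac{\delta(t)}{1+t}\,\omega(t)^4$ with $\omega(t)\to0$ by the second input. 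Combining with the bounds for $\mathrm I,\mathrm{II}$, we obtain, for all $t\ge2$,
\[
(1+t)\,\norm{u(t)}_{L_x^\infty}\le C_{u_0}+C\,\omega(t)^4\,\delta(t),
\]
which is \emph{linear} in $\delta(t)$ with a coefficient tending to $0$. To close, note that $\delta(t)<\infty$ for each fixed $t$ because $u\in C([0,t];H_x^3)\hook C([0,t];L_x^\infty)$; pick $T_0\ge2$ with $C\omega(t)^4\le\tfrac12$ for $t\ge T_0$; then for $T_0\le s\le t$ the displayed bound gives $(1+s)\norm{u(s)}_{L_x^\infty}\le C_{u_0}+\tfrac12\delta(t)$, while for $0\le s\le T_0$ one has $(1+s)\norm{u(s)}_{L_x^\infty}\le(1+T_0)\norm{u}_{L_t^\infty L_x^\infty}=:D<\infty$; taking the supremum over $s\in[0,t]$ yields $\delta(t)\le\max\{D,\ C_{u_0}+\tfrac12\delta(t)\}$, whence $\delta(t)\le\max\{D,2C_{u_0}\}$ uniformly in $t$. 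Thus $\norm{u(t)}_{L_x^\infty}\lesssim(1+t)^{-1}\lesssim t^{-1}$.

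The hard part is the near--diagonal term $\mathrm{III}$: the whole point is to bound it by something \emph{linear} in the bootstrap quantity $\delta$ (rather than quintic or quartic), which forces one to spend four of the five copies of $u$ in $|u|^4u$ on a spacetime norm over $[t/2,t]$ that genuinely decays to $0$. This is where scattering — not merely global well-posedness — is essential, and where the quintic structure $5=1+4$, the two-dimensional embedding $H^{1+\epsilon}\hook L^\infty$, and the use of an inhomogeneous Strichartz estimate in place of the too-singular pointwise bound \eqref{eq: dispersive} all play together. The rate $t^{-1}$ obtained is precisely the two-dimensional dispersive rate $t^{-d/2}$, which is why it is weaker than the rate $t^{-3/2}$ of the three-dimensional models in Theorems \ref{main} and \ref{main2}.
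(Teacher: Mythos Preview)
Your argument is correct and closes the bootstrap, but it is organized rather differently from the paper's. The paper keeps the three-piece decomposition $F_1+F_2+F_3$ with cut points $M$ and $t-M$; the novelty in two dimensions sits in $F_2$, where the non-integrable kernel $(t-s)^{-1}s^{-1}$ is tamed by spending one factor of $u$ in $L^8_x$ and invoking the finite $L^8_{t,x}$ scattering norm (then choosing $M$ large), while $F_3$ is handled as in the 3d quintic case via the elementary interpolation Lemma~\ref{lem: ele}. You instead split only once, at $t/2$: your term $\mathrm{II}$ is bounded directly by $t^{-1}\|u\|_{L^\infty_{t,x}}\|u\|_{L^4_{t,x}}^4$ with no smallness required, and your near-diagonal term $\mathrm{III}$ is controlled via the inhomogeneous Strichartz estimate together with the Sobolev embedding $H^{1+\epsilon}(\R^2)\hookrightarrow L^\infty$, bypassing Lemma~\ref{lem: ele} entirely. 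Both routes rest on the same structural point---the near-diagonal Duhamel piece must enter \emph{linearly} in the bootstrap quantity with a coefficient given by the tail of a global spacetime norm, hence tending to zero---but your route is more streamlined, needing neither the auxiliary parameter $M$ nor the $L^2$/$H^3$ interpolation lemma. The price is that you lean more heavily on the full Strichartz machinery at regularity $1+\epsilon$, whereas the paper's approach stays closer to the bare dispersive estimate and reuses the same template across all three theorems.

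One small correction: the interpolation you invoke to place $u$ in $L^{4q'}_tW^{1+\epsilon,4r'}_x$ cannot use the $L^\infty_{t,x}$ endpoint as stated, since interpolating $W^{1+\epsilon,r}$ against $L^\infty$ loses derivatives. What you actually need is the stronger endpoint $u\in L^\infty_tW^{1+\epsilon,\infty}_x$, which does follow from the $L^\infty_tH^3_x$ bound via $H^3(\R^2)\hookrightarrow C^{1,\alpha}\hookrightarrow W^{1+\epsilon,\infty}$ for $\epsilon<1$; with that fix the interpolation yields exactly the norms you claim.
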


\subsection{Background and some further discussions}
Equation \eqref{eq: quinticmodel} is called energy critical because its Hamiltonian/Energy
\begin{equation}
    E(u(t)):=\int \frac{1}{2} |\nabla u(t,x)|^2+\frac{1}{6} |u(t,x)|^6 dx,
\end{equation}
is invariant under the natural scaling. 
The local theory is classical, see, for example, textbooks \cite{cazenave2003semilinear}, \cite{tao2006nonlinear}.

It is indeed highly nontrivial, even for Schwarz initial data, that one can have a global flow for \eqref{eq: quinticmodel}. Nevertheless, it is proven in the seminal work, \cite{colliander2008global}, see also reference therein, that all $H^{1}$ initial data gives a global flow. We summarize their results in the following 
\begin{proposition}[Scattering, global time-space bound and persistence of regularity]
Initial value problem \eqref{eq: quinticmodel} is globally well-posed and scattering. More precisely, for any $u_0$ with finite energy, $u_{0}\in H^{1}$, there exists a
unique global solution $u\in C^0_t({H}^1_x)\cap L^{10}_{x,t}$ such that
\begin{equation}
    \int_{-\infty}^{\infty} \int_{\mathbb{R}^3}|u(t,x)|^{10}dxdt\leq C_{\|u_{0}\|_{H^{1}}},
\end{equation}
for some constant $C(\|u_{0}\|_{H^{1}})$ that depends only on $\|u_{0}\|_{H^{1}}$.
And if $u_0 \in H^s$ for some $s>1$, then $u(t)\in H^s$ for all time $t$, and one has the uniform bounds
\begin{equation}
    \sup\limits_{t\in \mathbb{R}} ||u(t)||_{H^s} \leq C_{\|u_{0}\|_{H^{1}}}||u_0||_{H^s}.
\end{equation}
\end{proposition}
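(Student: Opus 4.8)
The first assertion is the main theorem of \cite{colliander2008global}, so I would not reproduce it; instead the plan is to recall the architecture of its proof and then carry out the persistence-of-regularity statement, which is the only part reducible to tools already recorded here. For the $H^1$ theory I would proceed as follows: first invoke the classical local theory (Strichartz estimates, small-data global well-posedness and scattering, and the perturbation/stability lemma), which gives a maximal-lifespan solution together with the dichotomy that either $\|u\|_{L^{10}_{t,x}}$ is finite --- whence $u$ is global and scatters --- or it blows up. Then argue by induction on the energy: setting $M(E):=\sup\{\|u\|_{L^{10}_{t,x}}:E(u_0)\le E\}$, the small-data theory gives $M(E)<\infty$ for small $E$, and one wants $M(E)<\infty$ for all $E$. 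If this failed there would be a critical value $E_c$, and a concentration--compactness/profile-decomposition argument (powered by the local and stability theory) would extract a minimal-energy blow-up solution of energy $E_c$ that is almost periodic modulo the scaling symmetry. A rigidity step then forces such a solution to be sharply frequency-localized and spatially concentrated on each characteristic time interval, and a frequency-localized interaction Morawetz inequality produces an a priori spacetime bound incompatible with the concentration --- the contradiction. Scattering and $\|u\|_{L^{10}_{t,x}}\le C(\|u_0\|_{H^1})$ come out of the same machine. The hard part, and what is special to \cite{colliander2008global}, is the Morawetz inequality together with the control of the error terms created by the frequency truncation.

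For persistence I would argue directly. Assume $u_0\in H^s$ with $s>1$, let $u$ be the global $H^1$ solution, and set $A:=\|u\|_{L^{10}_{t,x}(\R\times\R^3)}\le C(\|u_0\|_{H^1})$. Since $A<\infty$, partition $\R$ into $N=N(A)<\infty$ intervals $I_1,\dots,I_N$ with $\|u\|_{L^{10}_{t,x}(I_j\times\R^3)}\le\eta$ for a small absolute constant $\eta$. On one such $I=[a,b]$, run the Duhamel formula from $t=a$ at the $\dot H^s$ level: using the $L^2$-admissible pair $(10,30/13)$ and its dual $(2,6/5)$ from the $H^1$ theory, and the fractional Leibniz rule for the polynomial nonlinearity $|u|^4u=u^3\bar u^2$,
\[
\big\||\nabla|^s(|u|^4u)\big\|_{L^2_tL^{6/5}_x(I)}\lesssim\|u\|_{L^{10}_{t,x}(I)}^4\,\big\||\nabla|^s u\big\|_{L^{10}_tL^{30/13}_x(I)}\le\eta^4\,\big\||\nabla|^s u\big\|_{S^0(I)},
\]
where $S^0(I)$ denotes the full Strichartz norm; the inhomogeneous Strichartz estimate then gives
\[
\big\||\nabla|^s u\big\|_{S^0(I)}\lesssim\big\||\nabla|^s u(a)\big\|_{L^2}+\eta^4\big\||\nabla|^s u\big\|_{S^0(I)},
\]
so for $\eta$ small the last term is absorbed and $\|u(b)\|_{\dot H^s}\lesssim\|u(a)\|_{\dot H^s}$ with an absolute constant. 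Iterating over $I_1,\dots,I_N$ gives $\sup_t\|u(t)\|_{\dot H^s}\lesssim C_0^{\,N(A)}\|u_0\|_{\dot H^s}$, and adding mass conservation yields $\sup_t\|u(t)\|_{H^s}\le C_{\|u_0\|_{H^1}}\|u_0\|_{H^s}$. That $u(t)$ actually lies in $H^s$ and depends continuously on the data there follows from running the same contraction in $H^s$ rather than merely estimating a norm; scattering in $H^s$ follows by letting $b\to\pm\infty$ in Duhamel, the tail $\||\nabla|^s(|u|^4u)\|_{L^2_tL^{6/5}_x([T,\infty))}$ being summably small over the remaining intervals.

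I expect the only real obstacle to be the first assertion: the frequency-localized interaction Morawetz estimate of \cite{colliander2008global} has no shortcut, and the concentration--compactness extraction of the minimal blow-up solution, while by now standard, is technical. The persistence statement, by contrast, is an essentially mechanical consequence of the $H^1$ spacetime bound plus Strichartz theory; the one place needing a little care is the bookkeeping of the $N(A)$ small-norm intervals, the fractional Leibniz step being harmless here since $|u|^4u$ is a polynomial in $u,\bar u$.
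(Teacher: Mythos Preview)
The paper does not prove this proposition: it is quoted as a summary of \cite{colliander2008global} and used as a black box throughout. Your proposal therefore already goes well beyond what the paper itself does. Your persistence-of-regularity argument is correct and is the standard way to extract uniform $H^{s}$ bounds from the global $L^{10}_{t,x}$ bound; nothing needs fixing there. One minor historical remark: the architecture you sketch for the $H^{1}$ theory --- induction on energy, extraction of a minimal blow-up solution via concentration--compactness/profile decomposition, then rigidity --- is closer to the Kenig--Merle road map than to the original argument of \cite{colliander2008global}, which ran the induction on energy directly (working with near-minimal solutions and the frequency-localized interaction Morawetz estimate) without first constructing a critical element. Both routes yield the same conclusion, so this does not affect the correctness of your sketch.
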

In particular, via this proposition, we may assume in our article,  that there exists $M_1$ such that,
\begin{equation}\label{eq: boundglobal}
   ||u(t)||_{L^{\infty}_tH_x^3} \leq M_1,
\end{equation}
where $u$ solves \eqref{eq: quinticmodel}.

Parallel results hold for equation \eqref{eq: cubic}. If one works on initial data in $H^{1}$, the scattering result is indeed easier.
Much stronger low regularity results holds for equation \eqref{eq: cubic}, \cite{colliander2004global}, \cite{kenig2010scattering}, see also \cite{dodson2020global} and reference therein.

 As a corollary of lower regularity results \cite{colliander2004global}, \cite{kenig2010scattering}, one has 
\begin{proposition}
Initial value problem \eqref{eq: cubic} is globally well-posed and scattering in $H^1$ space. More precisely, for any $u_0$ with finite energy, $u_{0}\in H^{1}$, there exists a
unique global solution $u\in C^0_t({H}^1_x)\cap L^{5}_{x,t}$ such that
\begin{equation}
    \int_{-\infty}^{\infty} \int_{\mathbb{R}^3}|u(t,x)|^{5}dxdt\leq C_{\|u_{0}\|_{H^{1}}},
\end{equation}
for some constant $C(\|u_{0}\|_{H^{1}})$ that depends only on $\|u_{0}\|_{H^{1}}$.
And if $u_0 \in H^s$ for some $s>1$, then $u(t)\in H^s$ for all time $t$, and one has the uniform bounds
\begin{equation}
    \sup\limits_{t\in \mathbb{R}} ||u(t)||_{H^s} \leq C_{\|u_{0}\|_{H^{1}}}||u_0||_{H^s}.
\end{equation}
\end{proposition}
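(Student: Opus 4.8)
The plan is to deduce the proposition by combining the classical local theory for the $H^1$-subcritical cubic NLS with the global a priori spacetime bounds furnished by \cite{colliander2004global,kenig2010scattering}, and then to propagate higher regularity by a perturbative bootstrap on short subintervals. Concretely I would organize the argument in three stages: (i) global well-posedness in $H^1$; (ii) the global $L^5_{t,x}$ bound depending only on $\|u_0\|_{H^1}$, together with scattering; (iii) persistence of regularity with the stated uniform-in-time estimate. Stage (i) is standard: in $\R^3$ the cubic nonlinearity is energy-subcritical (indeed $\dot H^{1/2}$-critical, $s_c=\tfrac12$), so a Strichartz-space contraction gives local well-posedness in $H^1$ on an interval whose length depends only on $\|u_0\|_{H^1}$; conservation of mass and energy, with the defocusing sign essential, bounds $\|u(t)\|_{H^1}$ uniformly, and iteration of the local theory produces the global solution $u\in C^0_tH^1_x$. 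I would simply cite \cite{cazenave2003semilinear} or \cite{tao2006nonlinear} here.

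For stage (ii), interpolating conserved mass and energy gives $\sup_t\|u(t)\|_{\dot H^{1/2}_x}\lesssim_{\|u_0\|_{H^1}}1$, and the interaction Morawetz inequality (in the spirit of Colliander--Keel--Staffilani--Takaoka--Tao), or equivalently the concentration-compactness/rigidity argument of \cite{kenig2010scattering}, converts this a priori bound into a finite global spacetime norm, e.g. $\|u\|_{L^4_{t,x}(\R\times\R^3)}\le A(\|u_0\|_{H^1})$. Partitioning $\R$ into finitely many intervals on which this norm is small and running a Strichartz bootstrap, together with Sobolev embedding, upgrades this to $\|u\|_{L^5_{t,x}}\le C(\|u_0\|_{H^1})$. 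The asymptotic states $u^\pm$ and the convergence $\|u(t)-e^{it\Delta}u^\pm\|_{H^1_x}\to 0$ then come from the routine ``finite scattering norm $\Rightarrow$ scattering'' argument: express $e^{-it\Delta}u(t)$ through Duhamel and show it is Cauchy in $H^1_x$ as $t\to\pm\infty$.

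For stage (iii), fix $s>1$ and apply $\langle\nabla\rangle^s$ to the Duhamel formula. On any interval $I$ on which $\|u\|_{L^5_{t,x}(I)}$ (equivalently, a suitable critical Strichartz norm of $u$ on $I$) lies below a small absolute threshold, the fractional Leibniz rule together with Strichartz estimates yields $\|\langle\nabla\rangle^s u\|_{S(I)}\le 2\,\|u(t_I)\|_{H^s_x}$ for an endpoint $t_I$ of $I$. By stage (ii) the whole line splits into $N=N(\|u_0\|_{H^1})$ such intervals, and chaining the estimate across them gives $\sup_t\|u(t)\|_{H^s_x}\le 2^{N}\|u_0\|_{H^s_x}=:C(\|u_0\|_{H^1})\|u_0\|_{H^s_x}$; keeping the dependence on $\|u_0\|_{H^s_x}$ genuinely linear requires only a little extra care in isolating the highest-order derivative term from the low frequencies in the Leibniz step.

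The main obstacle is stage (ii): producing the global spacetime bound that depends on $\|u_0\|_{H^1}$ alone is precisely the deep content of \cite{colliander2004global,kenig2010scattering} --- the interaction Morawetz inequality and its exploitation, or the minimal-blowup-solution machinery --- and once that input is granted, the local theory, the passage to the scattering limit, and the persistence of regularity are all routine perturbative bookkeeping. A secondary technical nuisance, as noted above, is arranging the persistence-of-regularity constant in the clean product form $C_{\|u_0\|_{H^1}}\|u_0\|_{H^s}$ rather than $C_{\|u_0\|_{H^1}}(1+\|u_0\|_{H^s})$, which is handled by treating the top-order term with slightly more attention in the fractional Leibniz estimate.
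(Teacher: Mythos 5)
The paper offers no proof of this proposition at all --- it is stated as a direct citation of \cite{colliander2004global} and \cite{kenig2010scattering} --- and your outline is a correct, standard account of how those inputs (interaction Morawetz/spacetime bounds for $H^1$ data) combine with the classical local theory and a small-interval Strichartz bootstrap to yield global well-posedness, the $L^5_{t,x}$ bound, scattering, and persistence of regularity. Nothing further is needed; your identification of stage (ii) as the only genuinely deep step, outsourced to the cited works, matches exactly how the paper uses the result.
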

By the above proposition, one may assume 
\begin{equation}\label{eq: boundglobal2}
   ||u(t)||_{L^{\infty}_tH_x^4} \leq M_1.
\end{equation}
where $u$ solves \eqref{eq: cubic}.

For the 2d quintic case, one also has the parallel result, \cite{planchon2009bilinear}\cite{yu2018global},
\begin{proposition}\label{prop: 2dquintic}
Initial value problem \eqref{eq: 2dquintic} is globally well-posed and scattering in $H^1$ space. More precisely, for any $u_0$ with finite energy, $u_{0}\in H^{1}$, there exists a
unique global solution $u\in C^0_t({H}^1_x)\cap L^{8}_{x,t}$ such that
\begin{equation}
    \int_{-\infty}^{\infty} \int_{\mathbb{R}^2}|u(t,x)|^{8}dxdt\leq C_{\|u_{0}\|_{H^{1}}},
\end{equation}
for some constant $C(\|u_{0}\|_{H^{1}})$ that depends only on $\|u_{0}\|_{H^{1}}$.
And if $u_0 \in H^s$ for some $s>1$, then $u(t)\in H^s$ for all time $t$, and one has the uniform bounds
\begin{equation}
    \sup\limits_{t\in \mathbb{R}} ||u(t)||_{H^s} \leq C_{\|u_{0}\|_{H^{1}}}||u_0||_{H^s}.
\end{equation}
\end{proposition}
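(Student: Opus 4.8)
The first three assertions --- global well-posedness, scattering, and the global space--time bound $\|u\|_{L^8_{t,x}(\R\times\R^2)}\le C(\|u_0\|_{H^1})$ --- are precisely the theorems of \cite{planchon2009bilinear} and \cite{yu2018global}, which rest on an interaction Morawetz / bilinear virial estimate combined with the concentration--compactness and rigidity scheme; I take these as input. (Note that $L^8_{t,x}$ is the scaling-critical norm here, since \eqref{eq: 2dquintic} is $\dot H^{1/2}$-critical, and is controlled by the Strichartz norm at regularity $1/2$ via $\| |\nabla|^{1/2}u\|_{L^8_tL^{8/3}_x}$ with $(8,8/3)$ admissible in $\R^2$.) It then remains only to prove the persistence of regularity, i.e.\ the final display, and this is a routine bootstrap on Strichartz norms, entirely parallel to the $3$d statements recorded above.

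Fix $s>1$; since the nonlinearity is algebraic, $|u|^4u=u^3\bar{u}^2$, it is smooth and the fractional Leibniz rule is available at every order. Using $\|u\|_{L^8_{t,x}(\R\times\R^2)}<\infty$, partition $\R$ into $N=N(\|u_0\|_{H^1})$ consecutive intervals $I_1,\dots,I_N$ (ordered outward from $0$) on each of which $\|u\|_{L^8_{t,x}(I_j\times\R^2)}\le\eta$, with $\eta$ a small absolute constant to be fixed. Writing $S^s(I)$ for the Strichartz norm at regularity $s$ on $I\times\R^2$ and applying Strichartz estimates to the Duhamel formula for \eqref{eq: 2dquintic} together with the fractional Leibniz rule (choosing the dual admissible exponents so that $\|u\|_{L^8_{t,x}}$ carries the four surplus factors of $u$), one gets on each $I_j$
\begin{equation}
\|u\|_{S^s(I_j)}\;\le\;C_0\,\|u(t_j)\|_{H^s}\;+\;C_0\,\|u\|_{L^8_{t,x}(I_j\times\R^2)}^{4}\,\|u\|_{S^s(I_j)},
\end{equation}
where $t_j$ is an endpoint of $I_j$. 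Choosing $\eta$ with $C_0\eta^4\le\tfrac12$ absorbs the last term, giving $\|u\|_{S^s(I_j)}\le 2C_0\|u(t_j)\|_{H^s}$ and in particular $\sup_{t\in I_j}\|u(t)\|_{H^s}\le 2C_0\|u(t_j)\|_{H^s}$.

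Chaining this over the $N$ intervals gives
\begin{equation}
\sup_{t\in\R}\|u(t)\|_{H^s}\;\le\;(2C_0)^N\,\|u_0\|_{H^s}\;=\;C(\|u_0\|_{H^1})\,\|u_0\|_{H^s},
\end{equation}
since $C_0$ and $\eta$ are absolute while $N$ depends on $u_0$ only through $\|u_0\|_{H^1}$; this is the claimed uniform bound. The only genuinely deep ingredient is the global $L^8_{t,x}$ bound, imported from \cite{planchon2009bilinear}, \cite{yu2018global} (scattering in $H^1$ then follows from its finiteness by the standard argument); the bootstrap above is routine, the one point requiring minor care being the fractional Leibniz rule for non-integer $s$ and the bookkeeping ensuring that $N$ --- hence the final constant --- depends on the data only through $\|u_0\|_{H^1}$.
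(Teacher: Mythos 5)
Your proposal is correct and matches the paper's treatment: the paper states this proposition without proof, simply citing \cite{planchon2009bilinear} and \cite{yu2018global} for global well-posedness, scattering, and the $L^{8}_{t,x}$ bound, exactly the ingredients you import. The persistence-of-regularity bootstrap you supply (partitioning time into $N(\|u_0\|_{H^1})$ intervals of small $L^8_{t,x}$ norm and closing a Strichartz estimate at regularity $s$ via the fractional Leibniz rule with the dual admissible pair $(8/3,8)$) is the standard argument the paper leaves implicit, and your bookkeeping of the constants is sound.
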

By the above proposition, one may assume 
\begin{equation}\label{eq: boundglobal3}
   ||u(t)||_{L^{\infty}_tH_x^3} \leq M_1.
\end{equation}
where $u$ solves \eqref{eq: 2dquintic}.

The proof for three different models follows a similar scheme.  Compare to the energy critical model in $\mathbb{R}^{3}$, 3d cubic model needs an extra trick to deal with fact the nonlinearity is not of power high enough, and 2d quintic model needs an (different) extra trick to deal with the fact in dimension 2, the dispersive estimate is not strong enough.

The method in this paper relies on an important observation from \cite{journe1991decay}, that in dimension $d\geq 3$, since the dispersive estimate \eqref{eq: dispersive} gives a decay $t^{-d/2}$ which is integrable in time, then it is possible to treat things in a more perturbative way. However, in this article, we don't necessarily need $d\geq 3$. The reason is that if one wants to view the nonlinear $|u|^{p}u$ as $Vu$, then from the view point of bootstrap or from the results one wants to prove, this $V$ is decaying in time, thus, compensating the non-integrability of $t^{-1}$ when $d=2.$

It turns out Lin and Strauss already studied and obtained the decay of $L^{\infty}$ norm of 3d NLS in their classical paper, \cite{lin1978decay}, and our scheme is similar to theirs. They have different assumptions, but more or less that is due to at that time, the scattering behavior for NLS is not as well understood as today. Their approaches are revisited by a more modern language in the proof of Corollary 3.4 in \cite{grillakis2013pair} for the 3d cubic Hartree equations.\footnote{We get notified of the work \cite{grillakis2013pair} after we finish the current article, which lead us to detail of the work \cite{lin1978decay}. The current article basically illustrates how to get dispersive decay, given one already knows the scattering result. It also covers the 2d quintic case.}

It may be possible to apply vector filed methods and using commutator type estimates to approach those types of problems, see for example, \cite{klainerman1985uniform}. One main difference is it may require the data lives in some weighted $L_{x}^{2}$ space. One difficulty to use $L^{1}\cap H^{s}$ initial data may be the following. Consider for example, energy critical NLS, it will fall into a perturbative regime, in some sense, after evolving time $L$ long enough. However, it is not easy to propagate the $L^{1}$ information from the initial data to time $L$. On the other hand, if one assumes initial data is in some weighted $L^{2}$ space so that $x^{m}D^{n}u_{0}$ is in $L^{2}$, this information is relative easier to be propagated to time $L$. See  \cite{klainerman1983global}, \cite{shatah1982global}. See also, in particular,  \cite{hayashi1986c}, where various decaying estimate are derived for several different  NLS models. As last, we mention that \cite{10.1093/imrn/rnn135} concerns the decay estimate as well. In their paper, they prove global existence of small solutions to NLS by space-time resonance method.

By similar treatments of this article, one can indeed obtain similar results for energy critical NLS in 4d with nice initial data. Also, for in dimension 3 or 4, when the nonlinearity is of higher power and algebraic, one may obtain similar results assuming the critical Sobolev norm stays bounded and associated conditional scattering holds. One may also extend to focusing case, when under certain mass/energy constraint, such a solution is known to scatter. We leave this to interested readers.

\subsection{Notation}
Throughout this note, we use $C$ to denote  the universal constant and $C$ may change line by line. We say $A\lesssim B$, if $A\leq CB$. We say $A\sim B$ if $A\lesssim B$ and $B\lesssim A$. 

Several parameters $M_{1}, M, \delta, L$ will be involved in the analysis. $M_{1}, \|u_{0}\|_{L_{1}}$ is fixed all time. All the parameters $M,\delta, L$ may depend on $M_{1}$, ( and universal constant $C$). Essentially, one first choose $M$, then chooses $\delta$, then chooses $L$. And we only consider $L\geq  100M$.

We also use notation $C_{B}$ to denote a constant depends on $B$.

We use usual $L^{p}$ spaces and Sobolev spaces $H^{s}$.
\subsection{Acknowledgment}
We thank Benjamin Dodson, Zihua Guo, Carlos Kenig and Gigliola Staffilani for helpful comments. C.F. is funded in part by an AMS-Simons Foundation travel grant.
\section{Proof of Theorem \ref{main}}
Let us define 
\begin{equation}
A(\tau):=\sup_{s\leq \tau} s^{3/2}\|u(s)\|_{L_{x}^{\infty}}.
\end{equation}
Note that $A(\tau)$ is monotone increasing.
One aim to prove there is some constant, depending on $u_{0}$, so that, 
\begin{equation}
A(\tau)\leq C_{u_{0}}, \forall t\geq 0.
\end{equation}

Recall we have \eqref{eq: boundglobal}, thus for any given $l$, one can find $C_{l}$ so that,
\begin{equation}
A(\tau)\leq C_{l}, 0\leq \tau\leq l.
\end{equation} 
and the solution is continuous  in time in $L^{\infty}$ since we are working on high regularity data.

Thus, Theorem \ref{main} follows from the following bootstrap lemma.
\begin{lemma}\label{lem: bootstrap}
There exists a constant $C_{u_{0}}$, for that if one has $A(\tau)\leq C_{u_{0}}$, then one has $A(\tau)\leq \frac{C_{u_{0}}}{2}$.
\end{lemma}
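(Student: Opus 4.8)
The plan is to couple the dispersive estimate \eqref{eq: dispersive} with the global spacetime control coming from the scattering theory, so that after some (profile-dependent) time $L$ the nonlinearity becomes genuinely perturbative, and then to close the bound on $A(\tau)$ by spending only one power of $\|u\|_{L^\infty_x}$ in each Duhamel term. The parameters will be chosen in the order $\delta,L,C_{u_0}$. First one fixes $\delta>0$ small, depending only on $M_1$. From $\|u\|_{L^{10}_{x,t}([0,\infty)\times\R^3)}\le C_{\|u_0\|_{H^1}}$ together with \eqref{eq: boundglobal} and the inhomogeneous Strichartz estimate, all Strichartz norms of $u$ on $[0,\infty)$ are bounded by a constant depending only on $M_1$; in particular, interpolating $L^{10}_{x,t}$ with the admissible pair $L^{10/3}_{x,t}$, $\|u\|_{L^5_{x,t}([0,\infty)\times\R^3)}\le C(M_1)$. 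Since $\|u\|_{L^{10}_{x,t}([0,\infty)\times\R^3)}<\infty$, one can then pick $L=L(\delta,u_0)$ --- depending on the \emph{profile} of $u_0$, not merely its norms --- with $\|u\|_{L^{10}_{x,t}([L,\infty)\times\R^3)}<\delta$; this is the source of the profile dependence of $C_{u_0}$, which is finally taken to be a large constant depending on $M_1$, $\|u_0\|_{L^1}$ and $L$.

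For $s\le 2L$ the estimate is trivial: by \eqref{eq: boundglobal} and Sobolev embedding $\|u(s)\|_{L^\infty_x}\lesssim\|u(s)\|_{H^3_x}\le M_1$, so $s^{3/2}\|u(s)\|_{L^\infty_x}\le(2L)^{3/2}CM_1\le C_{u_0}/2$ once $C_{u_0}$ is large enough. For $s>2L$ I would use the Duhamel formula from time $0$,
\[
u(s)=e^{is\Delta}u_0-i\int_0^s e^{i(s-s')\Delta}\big(|u|^4u\big)(s')\,ds',
\]
(from time $0$ because the $L^1$ datum cannot easily be propagated), and split $\int_0^s=\int_0^{s/2}+\int_{s/2}^{s-1}+\int_{s-1}^s$. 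The linear term contributes $\lesssim s^{-3/2}\|u_0\|_{L^1}$ by \eqref{eq: dispersive}. On $[0,s/2]$ one has $s-s'\ge s/2$, and \eqref{eq: dispersive} gives $\lesssim s^{-3/2}\|u\|_{L^5_{x,t}([0,s/2]\times\R^3)}^5\le s^{-3/2}C(M_1)$. On $[s/2,s-1]\subset[L,\infty)$ the kernel $(s-s')^{-3/2}$ is still integrable, so one again uses \eqref{eq: dispersive} but estimates the source by H\"older, $\||u(s')|^4u(s')\|_{L^1_x}\le\|u(s')\|_{L^\infty_x}\|u(s')\|_{L^{10}_x}^{5/2}\|u(s')\|_{L^2_x}^{3/2}$, inserting $\|u(s')\|_{L^\infty_x}\le A(\tau)(s')^{-3/2}$ (the bootstrap hypothesis), $\|u(s')\|_{L^2_x}\le M_1$, and H\"older in time to pull out $\|u\|_{L^{10}_{x,t}([s/2,s-1]\times\R^3)}^{5/2}\le\delta^{5/2}$; this piece is $\lesssim A(\tau)M_1^{3/2}\delta^{5/2}s^{-3/2}$. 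On the last unit interval $[s-1,s]\subset[L,\infty)$, where $(s-s')^{-3/2}$ is no longer integrable, one replaces the $L^1\to L^\infty$ estimate by the sub-critical $L^{4/3}\to L^4$ one together with $W^{1,4}\hookrightarrow L^\infty(\R^3)$, so $\|e^{i(s-s')\Delta}g\|_{L^\infty_x}\lesssim(s-s')^{-3/4}\|g\|_{W^{1,4/3}_x}$, and estimates $\|(|u|^4u)(s')\|_{W^{1,4/3}_x}$ by again spending one factor of $u$ in $L^\infty_x$, at least one in $L^{10}_x$, and the rest in $M_1$-controlled norms (via $H^3\hookrightarrow L^p$, $2\le p<\infty$, and $\nabla u\in L^2$); after H\"older in time this piece is $\lesssim M_1^3 A(\tau)\delta\,s^{-3/2}$. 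Summing and multiplying by $s^{3/2}$ would give
\[
A(\tau)\le C\|u_0\|_{L^1}+CC(M_1)+C\big(M_1^{3/2}\delta^{5/2}+M_1^3\delta\big)C_{u_0};
\]
choosing $\delta$ so small (depending only on $M_1$) that $C(M_1^{3/2}\delta^{5/2}+M_1^3\delta)\le\tfrac14$, and then $C_{u_0}$ large enough to dominate $4C\|u_0\|_{L^1}+4CC(M_1)$ and the $s\le2L$ bound, closes the bootstrap.

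The hard part --- and the point that dictates every choice above --- is the near-diagonal pieces $[s/2,s-1]$ and $[s-1,s]$. Since $s\le 2L$ already forces $C_{u_0}\gtrsim M_1L^{3/2}$, the naive estimate $\||u|^4u\|\lesssim\|u\|_{L^\infty_x}^4\|u\|_{H^2_x}$, which costs four powers of $A(\tau)$, would require a smallness parameter beating $C_{u_0}^3$ and cannot close. The remedy is to spend exactly one power of $\|u(s')\|_{L^\infty_x}$ --- precisely what produces the target $s^{-3/2}$ decay --- and to route the remaining four copies of $u$ into the small norm $\|u\|_{L^{10}_{x,t}([L,\infty))}\le\delta$ and into norms bounded by $M_1$; then the near-diagonal contributions are only \emph{linear} in $C_{u_0}$, so $\delta$ can be fixed depending only on $M_1$, before $L$ and $C_{u_0}$, and there is no circularity. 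The only other technical wrinkle is the non-integrable kernel on the last unit interval, handled by the sub-critical dispersive bound and $W^{1,4}\hookrightarrow L^\infty$, which is where the $H^3$ (rather than $H^1$) hypothesis is used.
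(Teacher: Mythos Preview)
Your argument is correct and shares the paper's overall scheme (bootstrap on $A(\tau)$, Duhamel from time $0$, linear dependence on $C_{u_0}$ achieved by spending exactly one copy of $\|u\|_{L^\infty_x}$, smallness of $\|u\|_{L^{10}_{t,x}}$ after a profile-dependent time $L$), but the execution differs in two substantive respects.

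\emph{Splitting and parameters.} The paper uses four parameters $M,\delta,L,C_{u_0}$ and splits $[0,M]\cup[M,t-M]\cup[t-M,t]$: the middle piece $F_2$ is controlled purely by choosing $M$ large so that $\int_M^{t-M}(t-s)^{-3/2}s^{-3/2}\,ds$ is small, with \emph{no} use of $\delta$; the $\delta$-smallness is spent only on the near-diagonal piece $F_3$. You use three parameters and split $[0,s/2]\cup[s/2,s-1]\cup[s-1,s]$, exploiting $\delta$ on both of the last two pieces and the global $L^5_{t,x}$ Strichartz bound on the first.

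\emph{Near-diagonal piece.} This is the genuinely different idea. The paper does \emph{not} use any dispersive estimate on $[t-M,t]$: instead it estimates $\|F_3\|_{L^2_x}$ and $\|F_3\|_{H^3_x}$ directly (unitarity of the propagator, $H^3$ an algebra) and then invokes the elementary interpolation $\|f\|_{L^\infty}\lesssim\|f\|_{L^2}^{2/5}\|f\|_{H^3}^{3/5}$ (Lemma~\ref{lem: ele}). Your route --- subcritical $L^{4/3}\!\to\! L^4$ dispersive combined with $W^{1,4}(\R^3)\hookrightarrow L^\infty$ --- is a clean alternative that makes the kernel $(s-s')^{-3/4}$ locally integrable and removes the need for the extra parameter $M$. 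The paper's interpolation lemma, on the other hand, is what drives the $3$d cubic proof (it is sharpened to Lemma~\ref{lem: ele2}, requiring $H^4$), so the paper's approach is tailored to port to the other models treated there; your $W^{1,4}$ trick would need to be reworked for those.
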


Now we turn to the proof of Lemma \ref{lem: bootstrap}. The way to choose $C_{u_{0}}$ will become clear in the proof. Fix $\tau$, we only need to prove for any $t\leq \tau$, one has 
\begin{equation}\label{eq: goal}
\|u(t)\|_{L_{x}^{\infty}}\leq \frac{C_{u_{0}}}{2}t^{-3/2}.
\end{equation}
We recall here, by bootstrap assumption, we can apply the following estimates in the proof
\begin{equation}\label{eq: boosassu}
\|u(t)\|_{L_{x}^{\infty}}\leq C_{u_{0}}t^{-3/2}.
\end{equation}
 Observe, any $\delta$, we can choose $L$, so that for one has 
\begin{equation}\label{eq: scatteringdecay}
\big( \int_{L/2}^{\infty}\|u\|^{10}_{L_{x}^{10}}\big)^{\frac{1}{10}} \leq \delta.
\end{equation}

We will fix two special $\delta, L$ in the proof,  through the exact way choice of those two parameters will only be made clear later.

We will only study $t\geq L$, and estimate all $t\leq L$ directly via
\begin{equation}
\|u(t)\|_{L_{x}^{\infty}}\leq A(L)t^{-3/2}, t\leq L.
\end{equation}

Now we fix $L\leq t\leq \tau$. By Duhamel's Formula, we can write the nonlinear solution $u(t,x)$ as follows,
\begin{equation}
    u(t,x)=e^{it\Delta}u_0+i\int_0^t e^{i(t-s)\Delta}(|u|^4u)(s) ds=u_l+u_{nl}.
\end{equation}

Dispersive estimate gives for some constant $C_{0}$
\begin{equation}\label{eq: linear}
\|u_{l}(t)\|_{L_{t}^{\infty}}\leq C_{0}t^{-3/2}\|u_{0}\|_{L_{x}^{1}}.
\end{equation} 

Now, we need an extra parameter $M$, and we split $u_{nl}$ into 
\begin{equation}
u_{nl}=F_{1}+F_{2}+F_{3},
\end{equation}
where
\begin{equation}
\begin{aligned}
    &F_1(t)=i\int_0^M e^{i(t-s)\Delta}(|u|^4u)(s) ds,\\
    &F_2(t)=i\int_M^{t-M} e^{i(t-s)\Delta}(|u|^4u)(s) ds,\\
    &F_3(t)=i\int_{t-M}^t e^{i(t-s)\Delta}(|u|^4u)(s) ds.\\
    \end{aligned}
\end{equation}
It will be clear how to choose the value of $M$, when we estimate $F_{2}$. And it will be clear how to choose the value of $\delta$, (depending on $M$), when we estimate $F_{3}$. And one chooses $L$,  depending on $\delta$ so that \eqref{eq: scatteringdecay} holds.

We will estimate $F_{1}$ as 
\begin{equation}\label{eq: esf1}
\begin{aligned}
\|F_{1}(t)\|_{L_{x}^{\infty}}&\leq \int_{0}^{M}\|e^{i(t-s)\Delta}|u|^{4}u(s)\|_{L_{x}^{\infty}}\\
&\lesssim M(t-M)^{-3/2}\sup_{s}\|u^{5}(s)\|_{L_{x}^{1}}\\
&\lesssim Mt^{-3/2}\sup_{s}\|u(s)\|_{H^{3}}^{5}\\
&\lesssim MM_{1}^{5}t^{-3/2}.
\end{aligned}
\end{equation}

For $F_2$,  we will apply \eqref{eq: boosassu}, and use pointwise estimate
\begin{equation}\label{eq: pt}
\|e^{i(t-s)\Delta}|u(s)|^{4}u(s)\|_{L_{x}^{\infty}}\lesssim (t-s)^{-3/2}\|u(s)\|_{H^{3}}^{4}\|u(s)\|_{L_{x}^{\infty}}\lesssim C_{u_{0}}M_{1}^{4}(t-s)^{-3/2}s^{-3/2}.
\end{equation}
And one estimate $F_{2}$ via 
\begin{equation}\label{eq: esf2pre}
\|F_{2}(t)\|_{L^{\infty}_x} \leq CC_{u_{0}}M_{1}^{4}\int_{M}^{t-M}(t-s)^{-3/2}s^{-3/2}ds.
\end{equation}
Now, choosing $M$, so that
\begin{equation}\label{eq: choiceofM}
CM_{1}^{4}\int_{M}^{t-M}(t-s)^{-3/2}s^{-3/2}ds\leq \frac{1}{10}t^{-3/2},
\end{equation}
and we can estimate $F_{2}$ as 
\begin{equation}\label{eq: esf2}
\|F_{2}(t)\|_{L^{\infty}_x} \leq \frac{1}{10}C_{u_{0}}t^{-3/2}.
\end{equation}
The estimate of $F_{3}$ will be the most tricky part.  We first state the following technical Lemma,
\begin{lemma}\label{lem: ele}
Let $f$ be a $H^{3}$ function in $\mathbb{R}^{3}$, with
\begin{equation}
\|f\|_{L_{x}^{2}}\leq a, \|f\|_{H^{3}}\leq b,
\end{equation}
then one has 
\begin{equation}
\|f\|_{L^{\infty}}\lesssim (a^{2}b^{3})^{1/5}.
\end{equation}
\end{lemma}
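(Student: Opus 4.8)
The plan is to derive the bound by interpolating the inhomogeneous Sobolev norm $\|f\|_{H^{9/5}}$ between $\|f\|_{L^2}$ and $\|f\|_{H^3}$, and then invoking the Sobolev embedding $H^{9/5}(\R^3)\hookrightarrow L^\infty(\R^3)$, which is available precisely because $9/5>3/2$. The exponent $9/5$ is not arbitrary: it is the unique $s\in(3/2,3)$ for which the interpolation weights $(1-s/3,\,s/3)$ equal $(2/5,\,3/5)$, which is exactly what is needed to match the target power $(a^2b^3)^{1/5}$.

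First I would record the embedding in a self-contained form via Fourier inversion and Cauchy--Schwarz:
\[
\|f\|_{L^\infty}\lesssim \|\widehat f\|_{L^1_\xi}\le \Big(\int_{\R^3}\langle\xi\rangle^{-18/5}\,d\xi\Big)^{1/2}\Big(\int_{\R^3}\langle\xi\rangle^{18/5}|\widehat f(\xi)|^2\,d\xi\Big)^{1/2}=C\|f\|_{H^{9/5}},
\]
where the first factor is finite because $\langle\xi\rangle^{-18/5}$ is integrable on $\R^3$ ($18/5>3$).

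Next I would interpolate on the Fourier side: writing $\langle\xi\rangle^{18/5}|\widehat f|^2=(|\widehat f|^2)^{2/5}\,(\langle\xi\rangle^{6}|\widehat f|^2)^{3/5}$ and applying H\"older with conjugate exponents $5/2$ and $5/3$ gives
\[
\|f\|_{H^{9/5}}^2\le \Big(\int_{\R^3}|\widehat f|^2\Big)^{2/5}\Big(\int_{\R^3}\langle\xi\rangle^{6}|\widehat f|^2\Big)^{3/5}=\|f\|_{L^2}^{4/5}\,\|f\|_{H^3}^{6/5},
\]
hence $\|f\|_{H^{9/5}}\le\|f\|_{L^2}^{2/5}\|f\|_{H^3}^{3/5}$. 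Combining this with the previous display and the hypotheses $\|f\|_{L^2}\le a$, $\|f\|_{H^3}\le b$ produces $\|f\|_{L^\infty}\lesssim (a^2b^3)^{1/5}$, as claimed.

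I do not expect a genuine obstacle here; the only points requiring care are the bookkeeping of exponents — fixing $s=9/5$ so that simultaneously $s>3/2$ (for the embedding) and the interpolation weights yield exactly $(a^2b^3)^{1/5}$ — and the elementary check that $\int_{\R^3}\langle\xi\rangle^{-18/5}\,d\xi<\infty$. An equivalent Littlewood--Paley argument (Bernstein on each dyadic piece $P_Nf$, summing, and optimizing a frequency cutoff $N_0$) would also work, and in fact yields the sharper $\|f\|_{L^\infty}\lesssim\|f\|_{L^2}^{1/2}\|f\|_{\dot H^3}^{1/2}$; but since only the stated inequality is needed downstream, the Fourier/H\"older route is the most economical.
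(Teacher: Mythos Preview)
Your argument is correct: the Sobolev embedding $H^{9/5}(\R^3)\hookrightarrow L^\infty(\R^3)$ together with the Fourier-side H\"older interpolation $\|f\|_{H^{9/5}}\le\|f\|_{L^2}^{2/5}\|f\|_{H^3}^{3/5}$ gives exactly the stated bound, and all the exponent checks are clean.

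The paper, however, takes a different and more elementary physical-space route. It uses only the embedding $H^3(\R^3)\hookrightarrow C^1(\R^3)$ to get $\|\nabla f\|_{L^\infty}\le b$; then, if $|f(x_0)|=c$, the Lipschitz bound forces $|f|\sim c$ on a ball of radius $\sim c/b$ about $x_0$, and integrating $|f|^2$ over that ball yields $c^2(c/b)^3\lesssim a^2$, i.e.\ $c\lesssim(a^2b^3)^{1/5}$. This avoids any Fourier computation and makes the mechanism transparent (a large value plus a gradient bound costs $L^2$ mass). It also iterates cleanly: the paper reuses the same idea to prove the $H^4$ refinement (Lemma~\ref{lem: ele2}) by first applying the argument to $\nabla f$ and then to $f$. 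Your Fourier approach is the textbook one and, as you note, even recovers the sharper Gagliardo--Nirenberg bound $\|f\|_{L^\infty}\lesssim\|f\|_{L^2}^{1/2}\|f\|_{\dot H^3}^{1/2}$; the paper's argument is cruder but entirely self-contained and better suited to the two-step iteration needed later.
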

We will present the proof of Lemma \ref{lem: ele} in the Appendix.\vspace{3mm}

Following Lemma \ref{lem: ele}, we estimate the  $L^{2}$-norm and $H^{3}$-norm of $F_{3}$.
Note that $H^{3}$ is a Banach algebra, and $e^{i(t-s)\Delta}$ is unity in $H^{3}$, we directly estimate $\|F_{3}(t)\|_{H^{3}}$ as 
\begin{equation}\label{eq: H3}
\|F_{3}(t)\|_{H^{3}}\leq MM_{1}^{5}.
\end{equation}

For $||F_{3}(t)||_{L^{2}_{x}}$, we will use the fact $t-M\geq L/2$ and rely on \eqref{eq: scatteringdecay}. Also note $t-M\sim t$ since $t\geq L\geq 100M$. We estimate as 
\begin{equation}
\begin{aligned}
\big\|\int_{t-M}^t e^{i(t-s)\Delta}|u|^4uds \big\|_{L^{2}_x} &\leq \int_{t-M}^t || |u|^4u ||_{L_x^2}ds \\
  &\leq  \int_{t-M}^t || u ||^{\frac{15}{8}}_{L_x^{10}}\cdot || u ||^{\frac{5}{2}}_{L_x^{\infty}} \cdot || u ||^{\frac{5}{8}}_{L_x^{2}} ds \\ 
    &\leq CM_1^{\frac{5}{8}}(C_{u_{0}}t^{-\frac{3}{2}})^{\frac{5}{2}} \int_{t-M}^t || u ||^{\frac{15}{8}}_{L_x^{10}} ds \\
     &\leq CM_1^{\frac{5}{8}}(C_{u_{0}}t^{-\frac{3}{2}})^{\frac{5}{2}} \cdot || u ||^{\frac{15}{8}}_{L^{10}_t[t-M,t]L_x^{10}} \cdot M^{\frac{13}{16}} \\
          &\leq CM_1^{\frac{5}{8}}M^{\frac{13}{16}}\delta^{\frac{15}{8}}(C_{u_{0}}t^{-\frac{3}{2}})^{\frac{5}{2}}. \\
\end{aligned}
\end{equation}
Thus, via Lemma \ref{lem: ele}, we derive
\begin{equation}\label{eq: esf3pre}
\aligned
\big\|\int_{t-M}^t e^{i(t-s)\Delta}|u|^4uds \big\|_{L^{\infty}_x} &\leq \big( CM_1^{\frac{5}{8}}M^{\frac{13}{16}}\delta^{\frac{15}{8}}(C_{u_{0}}t^{-\frac{3}{2}})^{\frac{5}{2}} \big)^{\frac{2}{5}} \cdot (M_{1}^{5}M)^{\frac{3}{5}}\\
&\leq C^{\frac{2}{5}}M^{\frac{37}{40}}M_1^{\frac{13}{3}}\delta^{\frac{3}{4}}(C_{u_0}t^{-\frac{3}{2}}).
\endaligned
\end{equation}
Thus, by choosing $\delta$ small enough, according to $M, M_{1}$, we can ensure
\begin{equation}\label{eq: esf3}
\|F_{3}(t)\|_{L_{x}^{\infty}}\leq \frac{1}{10}C_{u_{0}}t^{-3/2}.
\end{equation}
We note that we choose $L$, depending on $\delta$, so that \eqref{eq: scatteringdecay} holds .

We remark here, the choice of $M,L$ does not depend on $C_{u_{0}}$. Indeed, we will choose $C_{u_{0}}$ depending on $M, L$.
 
To summarize, for all $t\leq \tau$, assuming $A(\tau)\leq C_{u_{0}}$, we derive
\begin{itemize}
\item For $t\leq L$, one has 
\begin{equation}
u(t)\leq A(L)t^{-3/2}.
\end{equation}
\item For $L\leq t\leq \tau$, one has, via \eqref{eq: linear}, \eqref{eq: esf1}, \eqref{eq: esf2}, \eqref{eq: esf3}
\begin{equation}
u(t)\leq \{C(\|u_{0}\|_{L_{x}^{1}}+MM_{1}^{5})+\frac{1}{10}C_{u_{0}}+\frac{1}{10}C_{u_{0}}\}t^{-3/2}.
\end{equation}
\end{itemize}

Thus, if one choose 
\begin{equation}
C_{u_{0}}:=10A(L)+C(\|u_{0}\|_{L_{x}^{1}}+MM_{1}^{5}),
\end{equation}
then the desired estimates follows. This ends the proof of Lemma \ref{lem: bootstrap}. Thus proves Theorem \ref{main}.

\section{Proof of Theorem \ref{main2}}
The proof of Theorem \ref{main2} follows the same strategy as Theorem \ref{main}, but needs different idea to treat $F_{3}$. Comparing the two models, the dimensions are the same but the nonlinearities are obviously different. We will briefly overview the setting, discuss the treatment of $F_{3}$ term explicitly and skip other same treatments.\vspace{3mm}

Firstly, by Duhamel's Formula, we write the nonlinear solution $u(t,x)$ to \eqref{eq: cubic} as follows,
\begin{equation}
    u(t,x)=e^{it\Delta}u_0+i\int_0^t e^{i(t-s)\Delta}(|u|^2u)(s) ds=u_l+u_{nl}.
\end{equation}
Clearly dispersive estimate gives for some constant $C_{0}$,
\begin{equation}
\|u_{l}(t)\|_{L_{t}^{\infty}}\leq C_{0}t^{-3/2}\|u_{0}\|_{L_{x}^{1}}.
\end{equation} 
Then, we split $u_{nl}$ into 
\begin{equation}
u_{nl}=F_{1}+F_{2}+F_{3}
\end{equation}
where
\begin{equation}
\begin{aligned}
    &F_1(t)=i\int_0^M e^{i(t-s)\Delta}(|u|^2u)(s) ds,\\
    &F_2(t)=i\int_M^{t-M} e^{i(t-s)\Delta}(|u|^2u)(s) ds,\\
    &F_3(t)=i\int_{t-M}^t e^{i(t-s)\Delta}(|u|^2u)(s) ds.\\
    \end{aligned}
\end{equation}
The process of handling the $F_1$ term and the $F_2$ term are similar to the quintic model case so let's focus on the $F_3$ term.\vspace{3mm} 

Recall, we need to prove, given $M,M_{1}$, there exists $\delta>0$ (small enough and to be decided), such that if $L$
 is chosen, so that,
 \begin{equation}
 \|u\|_{L_{t,x}^{5}[L/2, \infty]}<\delta
 \end{equation}
 then,
 \begin{equation}
 \|F_{3}(t)\|_{L^{\infty}}\leq \frac{C_{u_{0}}}{10}t^{-3/2}.
 \end{equation}
 We need to improve Lemma \ref{lem: ele} to
 \begin{lemma}\label{lem: ele2}
 Let $f(x)$ be a $H^{4}$ function in $\mathbb{R}^{3}$, with  
 \begin{equation}
 \|f\|_{L^{2}}\leq a_{1}, \|\nabla f\|_{L_{x}^{2}}\leq a_{2}, \|f\|_{H^{4}}\leq b. 
 \end{equation}
 Then one has 
 \begin{equation}
\|f\|_{L^{\infty}}\leq  a_{1}^{2/5}a_{2}^{6/25}b^{9/25}.
 \end{equation}
 \end{lemma}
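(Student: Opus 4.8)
The plan is to reduce the inequality to one homogeneous Gagliardo--Nirenberg estimate together with an elementary comparison of exponents. Since the right-hand side $a_{1}^{2/5}a_{2}^{6/25}b^{9/25}$ is non-decreasing in each of $a_{1},a_{2},b$ (the three exponents are positive and sum to $1$), it is enough to establish
\[
\|f\|_{L^{\infty}}\lesssim \|f\|_{L^{2}}^{2/5}\,\|\nabla f\|_{L^{2}}^{6/25}\,\|f\|_{H^{4}}^{9/25},
\]
and then substitute the hypotheses $\|f\|_{L^{2}}\le a_{1}$, $\|\nabla f\|_{L^{2}}\le a_{2}$, $\|f\|_{H^{4}}\le b$. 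Abbreviate $a_{1}=\|f\|_{L^{2}}$, $a_{2}=\|\nabla f\|_{L^{2}}$, $b=\|f\|_{H^{4}}$. With this normalization one has, for free, $a_{1}\le b$ and $a_{2}\le b$ (the $H^{4}$ norm dominates both the $L^{2}$ and the $\dot H^{1}$ norm) and, by interpolating $\dot H^{1}$ between $L^{2}$ and $\dot H^{4}$, the inequality $\|f\|_{\dot H^{1}}\le \|f\|_{L^{2}}^{3/4}\|f\|_{\dot H^{4}}^{1/4}$, that is $a_{2}^{4}\le a_{1}^{3}b$. These three relations, together with the Bernstein inequality, are the only inputs needed.

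The heart of the matter is the scale-invariant bound
\[
\|f\|_{L^{\infty}}\lesssim \|\nabla f\|_{L^{2}}^{5/6}\,\|f\|_{\dot H^{4}}^{1/6}=a_{2}^{5/6}b^{1/6}.
\]
I would prove this by the same type of Littlewood--Paley argument used for Lemma \ref{lem: ele}, except that the frequency cutoff is split between the $\dot H^{1}$ and $\dot H^{4}$ norms rather than between $L^{2}$ and $\dot H^{3}$. Writing $f=\sum_{N}P_{N}f$ over dyadic $N$ and using $\|P_{N}f\|_{L^{\infty}}\lesssim N^{3/2}\|P_{N}f\|_{L^{2}}$ together with $\|P_{N}f\|_{L^{2}}\lesssim \min(N^{-1}a_{2},N^{-4}b)$ gives
\[
\|f\|_{L^{\infty}}\le\sum_{N}\|P_{N}f\|_{L^{\infty}}\lesssim\sum_{N}\min\bigl(N^{1/2}a_{2},\,N^{-5/2}b\bigr).
\]
The summand grows up to the balance scale $N_{\ast}=(b/a_{2})^{1/3}$ and decays beyond it, so each of the two geometric tails is dominated by its value at $N_{\ast}$, which is $\sim a_{2}^{5/6}b^{1/6}$. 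It is the dyadic summation here — equivalently, working in $\dot B^{3/2}_{2,1}\hookrightarrow L^{\infty}$ — that makes the step legitimate, since the naive endpoint interpolation $\dot H^{3/2}\hookrightarrow L^{\infty}$ fails; this is the one point where a little care is required.

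It then remains to verify the purely algebraic statement $a_{2}^{5/6}b^{1/6}\le a_{1}^{2/5}a_{2}^{6/25}b^{9/25}$. Cancelling common powers, this is equivalent to $a_{2}^{89}\le a_{1}^{60}b^{29}$, which follows from $a_{2}^{4}\le a_{1}^{3}b$ — raised to the $22$nd power it gives $a_{2}^{88}\le a_{1}^{66}b^{22}$ — combined with $a_{2}\le b$ and then $a_{1}\le b$, since $a_{2}^{89}=a_{2}^{88}a_{2}\le a_{1}^{66}b^{23}\le a_{1}^{60}b^{29}$. Chaining the three displayed inequalities proves $\|f\|_{L^{\infty}}\lesssim a_{1}^{2/5}a_{2}^{6/25}b^{9/25}$, and substituting the hypotheses finishes the argument (the degenerate case $f\equiv 0$, where all norms vanish, is trivial). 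In short, the main obstacle is not analytic at all — once the Littlewood--Paley estimate behind Lemma \ref{lem: ele} is available, everything reduces to keeping the exponent bookkeeping straight, which is exactly why the paper presents Lemma \ref{lem: ele2} as an improvement of Lemma \ref{lem: ele}.
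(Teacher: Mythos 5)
Your proof is correct, but it takes a genuinely different route from the paper's. The paper never invokes Littlewood--Paley theory here: Lemma \ref{lem: ele} is proved by the elementary observation that if $|f(x_{0})|=c$ then $|f|\sim c$ on a ball of radius $\sim c/\|\nabla f\|_{L^{\infty}}$, whence $c^{5}\lesssim \|f\|_{L^{2}}^{2}\|\nabla f\|_{L^{\infty}}^{3}$; Lemma \ref{lem: ele2} is then obtained by first applying Lemma \ref{lem: ele} to $\nabla f$ (giving $\|\nabla f\|_{L^{\infty}}\lesssim a_{2}^{2/5}b^{3/5}$) and then rerunning the same ball argument for $f$ with this improved gradient bound. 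So your opening premise that Lemma \ref{lem: ele} rests on a dyadic decomposition misdescribes the paper, but this is harmless, since you give a self-contained proof of the embedding $\dot B^{3/2}_{2,1}\hookrightarrow L^{\infty}$ and of the scale-invariant bound $\|f\|_{L^{\infty}}\lesssim\|\nabla f\|_{L^{2}}^{5/6}\|f\|_{\dot H^{4}}^{1/6}$, and your exponent bookkeeping checks out: $5/6-6/25=89/150$, $2/5=60/150$, $9/25-1/6=29/150$, and $a_{2}^{89}\le a_{1}^{66}b^{23}\le a_{1}^{60}b^{29}$ indeed follows from $a_{2}^{4}\le a_{1}^{3}b$, $a_{2}\le b$, $a_{1}\le b$. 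What your route buys is a sharper, scale-invariant intermediate estimate that does not involve $\|f\|_{L^{2}}$ at all and dominates the stated bound under the constraints relating the three norms; what the paper's route buys is brevity and a bound whose $a_{1}$-dependence arises directly, which is the feature actually exploited in the $F_{3}$ estimate, where the smallness $\delta$ enters through the $L^{2}$ norm. One cosmetic caveat, shared with the paper's own proof: both arguments produce an implicit absolute constant, so the conclusion of the lemma should really read $\lesssim$ rather than $\le$.
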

 We will present the proof of Lemma \ref{lem: ele2} in the Appendix.\vspace{3mm}
 
 Now the strategy is to estimate $||F_3||_{H^4}$, $||F_3||_{L_x^2}$ and $||\nabla F_3||_{L_x^2}$ respectively. Then we can apply Lemma \ref{lem: ele2}. Note that $H^{4}$ is a Banach algebra, and $e^{i(t-s)\Delta}$ is unity in $H^{4}$, we directly estimate $\|F_{3}(t)\|_{H^{4}}$ as 
\begin{equation}\label{eq: H4}
\|F_{3}(t)\|_{H^{4}}\leq MM_{1}^{3}.
\end{equation}
Then we turn to the estimate for $||F_3||_{L_x^2}$. 
 \begin{equation}
\aligned
\big\|\int_{t-M}^t e^{i(t-s)\Delta}|u|^2uds \big\|_{L^{2}_x} &\leq \int_{t-M}^t || |u|^2u ||_{L_x^2}ds \\
  &\leq  \int_{t-M}^t || u ||^{\frac{7}{6}}_{L_x^{5}}\cdot || u ||^{\frac{13}{10}}_{L_x^{\infty}} \cdot || u ||^{\frac{8}{15}}_{L_x^{2}} ds \\ 
    &\leq CM_1^{\frac{8}{15}}(C_{u_{0}}t^{-\frac{3}{2}})^{\frac{13}{10}} \int_{t-M}^t || u ||^{\frac{7}{6}}_{L_x^{5}} ds \\
     &\leq CM_1^{\frac{8}{15}}(C_{u_{0}}t^{-\frac{3}{2}})^{\frac{13}{10}} \cdot || u ||^{\frac{7}{6}}_{L^{5}_t[t-M,t]L_x^{5}} \cdot M^{\frac{23}{30}} \\
          &\leq  CM_1^{\frac{8}{15}}M^{\frac{23}{30}}\delta^{\frac{7}{6}}(C_{u_{0}}t^{-\frac{3}{2}})^{\frac{13}{10}}. \\
\endaligned
\end{equation}
Also, we can deal with the estimate for $||\nabla F_3||_{L_x^2}$ as follows.
  \begin{equation}
\aligned
\big\|\int_{t-M}^t e^{i(t-s)\Delta}\nabla(|u|^2u)ds \big\|_{L^{2}_x} &\leq \int_{t-M}^t || \nabla(|u|^2u) ||_{L_x^2}ds \\
  &\leq  \int_{t-M}^t || \nabla u ||_{L_x^{2}}\cdot || u ||^{2}_{L_x^{\infty}}  ds \\ 
    &\leq MM_1\cdot (C_{u_0}t^{\frac{3}{2}})^2.
\endaligned
\end{equation}
At last, putting the above estimates together and applying Lemma \ref{lem: ele2}, we have,
\begin{equation}
\aligned
\big\|\int_{t-M}^t e^{i(t-s)\Delta}|u|^2uds \big\|_{L^{\infty}_x} &\leq \big( CM_1^{\frac{8}{15}}M^{\frac{23}{30}}\delta^{\frac{7}{6}}(C_{u_{0}}t^{-\frac{3}{2}})^{\frac{13}{10}} \big)^{\frac{2}{5}} \cdot (MM_1\cdot (C_{u_0}t^{\frac{3}{2}})^2)^{\frac{6}{25}} \cdot (M_{1}^{3}M)^{\frac{9}{25}} \\
&\leq C^{\frac{2}{5}}M_1^{\frac{23}{25}}M^{\frac{68}{75}}\delta^{\frac{7}{15}}(C_{u_0}t^{\frac{3}{2}}). 
\endaligned
\end{equation}
Thus, by choosing $\delta$ small enough, according to $M, M_{1}$, we can ensure
\begin{equation}
\|F_{3}(t)\|_{L_{x}^{\infty}}\leq \frac{1}{10}C_{u_{0}}t^{-3/2},
\end{equation}
as desired.

\section{Proof of Theorem \ref{main3}}
The proof of Theorem \ref{main3} follows the same strategy as Theorem \ref{main} and Theorem \ref{main2}, but needs different idea to treat $F_{2}$ (since the decay is not enough for the 2d case). We will briefly overview the 2d setting, discuss the treatment of the $F_{2}$ term in detail and skip other same treatments.\vspace{3mm}

Let us define 
\begin{equation}
A(\tau):=\sup_{s\leq \tau} s\|u(s)\|_{L_{x}^{\infty}}.
\end{equation}
Note that $A(\tau)$ is monotone increasing.
One aim to prove there is some constant, depending on $u_{0}$, so that, 
\begin{equation}
A(\tau)\leq C_{u_{0}}, \forall t\geq 0.
\end{equation}
Similar to the previous two cases, Theorem \ref{main3} follows from the following bootstrap lemma.
\begin{lemma}\label{lem: bootstrap3}
There exists a constant $C_{u_{0}}$, for that if one has $A(\tau)\leq C_{u_{0}}$, then one has $A(\tau)\leq \frac{C_{u_{0}}}{2}$.
\end{lemma}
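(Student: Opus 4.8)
\textbf{Proof proposal for Lemma \ref{lem: bootstrap3}.}

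The plan is to mirror the bootstrap scheme of Lemma \ref{lem: bootstrap}, working with $A(\tau)=\sup_{s\leq\tau}s\|u(s)\|_{L^\infty_x}$ and the decay rate $t^{-1}$ appropriate to $\mathbb{R}^2$. First I would fix $\tau$ and reduce, exactly as before, to proving $\|u(t)\|_{L^\infty_x}\leq\tfrac{C_{u_0}}{2}t^{-1}$ for all $t\leq\tau$, using \eqref{eq: boundglobal3} and continuity in $L^\infty$ to dispose of $t\leq L$ via $\|u(t)\|_{L^\infty_x}\leq A(L)t^{-1}$, and using Proposition \ref{prop: 2dquintic} to choose $L$ large so that $\|u\|_{L^8_{t,x}[L/2,\infty)}\leq\delta$. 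For $L\leq t\leq\tau$ I would write $u=u_l+F_1+F_2+F_3$ with the same three-piece splitting at times $M$, $t-M$, $t-M$ to $t$, where now the linear term obeys $\|u_l(t)\|_{L^\infty_x}\leq C_0 t^{-1}\|u_0\|_{L^1_x}$ by the $d=2$ dispersive estimate, and $F_1$ is controlled by $\|F_1(t)\|_{L^\infty_x}\lesssim M(t-M)^{-1}\sup_s\|u(s)\|_{H^3}^5\lesssim MM_1^5 t^{-1}$; $F_3$ is handled by the same $L^2$/$H^3$ interpolation trick as in Section 2 (Lemma \ref{lem: ele}), using the smallness of $\|u\|_{L^8_{t,x}}$ near time $t$ together with the bootstrap bound on $\|u\|_{L^\infty_x}$ and $M_1$, yielding $\|F_3(t)\|_{L^\infty_x}\leq\tfrac{1}{10}C_{u_0}t^{-1}$ once $\delta$ is small. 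So the only genuinely new point is $F_2$.

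For $F_2(t)=i\int_M^{t-M}e^{i(t-s)\Delta}(|u|^4u)(s)\,ds$ the naive pointwise estimate gives, via the bootstrap assumption $\|u(s)\|_{L^\infty_x}\leq C_{u_0}s^{-1}$ and $\|u(s)\|_{H^3}\leq M_1$,
\begin{equation}
\|e^{i(t-s)\Delta}|u(s)|^4u(s)\|_{L^\infty_x}\lesssim (t-s)^{-1}\|u(s)\|_{H^3}^4\|u(s)\|_{L^\infty_x}\lesssim M_1^4 C_{u_0}(t-s)^{-1}s^{-1},
\end{equation}
and integrating $(t-s)^{-1}s^{-1}$ over $[M,t-M]$ produces a logarithmic loss $\sim t^{-1}\log(t/M)$ rather than the clean $t^{-1}$ — this is the main obstacle, and it is exactly the failure of integrability of $t^{-1}$ flagged in the introduction. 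The fix, as the paper hints, is to exploit that in this regime the effective potential $V=|u|^4$ is itself decaying: I would use one more power of the bootstrapped $L^\infty$ decay, e.g.\ bound $\|u(s)\|_{L^\infty_x}^4\lesssim (C_{u_0}s^{-1})^{\alpha}\|u(s)\|_{H^3}^{4-\alpha}$ for a small $\alpha>0$ wherever the $s$-integral is dangerous, so that the integrand becomes $(t-s)^{-1}s^{-1-\alpha}$ (or, splitting the interval at $t/2$ and putting the extra decay on whichever half is large), which integrates to $\lesssim M^{-\alpha}t^{-1}$ up to constants in $M_1$ and $C_{u_0}^{1+\alpha}$. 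Choosing $M$ large depending on $M_1$ (and absorbing the $C_{u_0}^{1+\alpha}$ against $C_{u_0}$ at the end, since $C_{u_0}$ will be chosen large last) then gives $\|F_2(t)\|_{L^\infty_x}\leq\tfrac{1}{10}C_{u_0}t^{-1}$. One must be slightly careful that the extra factors of $C_{u_0}$ introduced this way do not break the bootstrap; this is arranged by taking $\alpha$ small and choosing $C_{u_0}$ at the very end, depending on $A(L)$, $\|u_0\|_{L^1_x}$, $M$, $L$, and $M_1$, so that the total contribution from $u_l$, $F_1$, $F_2$, $F_3$ is at most $\tfrac{C_{u_0}}{2}t^{-1}$.

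Assembling: for $t\leq L$ we have $\|u(t)\|_{L^\infty_x}\leq A(L)t^{-1}$, and for $L\leq t\leq\tau$ we have
\begin{equation}
\|u(t)\|_{L^\infty_x}\leq\Big\{C\big(\|u_0\|_{L^1_x}+MM_1^5\big)+\tfrac{1}{10}C_{u_0}+\tfrac{1}{10}C_{u_0}\Big\}t^{-1},
\end{equation}
so that the choice $C_{u_0}:=10A(L)+C(\|u_0\|_{L^1_x}+MM_1^5)$ closes the bootstrap $A(\tau)\leq C_{u_0}\Rightarrow A(\tau)\leq\tfrac{C_{u_0}}{2}$, proving Lemma \ref{lem: bootstrap3} and hence Theorem \ref{main3}. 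I expect the $F_2$ estimate — specifically making the decaying-potential gain quantitative without circularity in $C_{u_0}$ — to be the step requiring the most care; everything else is a routine transcription of Sections 2 and 3 with $t^{-3/2}$ replaced by $t^{-1}$ and the $\mathbb{R}^3$ dispersive decay replaced by the $\mathbb{R}^2$ one.
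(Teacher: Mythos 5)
Your overall architecture (reduction to $t\in[L,\tau]$, the splitting $u=u_l+F_1+F_2+F_3$, the treatment of $u_l$, $F_1$, $F_3$, and the final choice of $C_{u_0}$) matches the paper, and you correctly identify that the whole difficulty is $F_2$: the kernel $(t-s)^{-1}s^{-1}$ is not integrable and produces a $t^{-1}\log t$ loss. However, your proposed fix for $F_2$ has a genuine gap that I do not believe can be repaired as stated. Extracting an extra factor $(C_{u_0}s^{-1})^{\alpha}$ from the bootstrap hypothesis makes the $F_2$ bound \emph{superlinear} in the bootstrap constant: you end up with $\|F_2(t)\|_{L^\infty_x}\leq C\,\alpha^{-1}M_1^{4-\alpha}M^{-\alpha}C_{u_0}^{1+\alpha}t^{-1}$, and closing the bootstrap requires $C\,\alpha^{-1}M_1^{4-\alpha}(C_{u_0}/M)^{\alpha}\leq 1/10$, i.e.\ $M\gtrsim C_{u_0}\cdot(10C M_1^4/\alpha)^{1/\alpha}$. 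But the $F_1$ estimate forces $C_{u_0}\geq C M M_1^5$, so this is circular: $M$ must dominate $C_{u_0}$ while $C_{u_0}$ dominates $M$. Your two proposed escapes both fail: choosing $C_{u_0}$ ``large at the end'' makes the ratio $C_{u_0}^{1+\alpha}/C_{u_0}=C_{u_0}^{\alpha}$ \emph{worse}, not better; and sending $\alpha\to 0$ kills the gain $M^{-\alpha}\to 1$ while the constant from the $s$-integral (and from the logarithm on the half $[t/2,t-M]$) blows up like $\alpha^{-1}$. Even recasting this as a nonlinear bootstrap inequality $A\leq B+\epsilon A^{1+\alpha}$ does not close, since $B\gtrsim MM_1^5$ forces $\epsilon B^{\alpha}\gtrsim M_1^{4+5\alpha}/\alpha\geq 1$.

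The paper's resolution is different and is the one genuinely new idea of this section: the extra integrability in $s$ must come from a quantity that is \emph{independent of the bootstrap constant}, namely the global spacetime bound $\|u\|_{L^8_{t,x}}<\infty$ of Proposition \ref{prop: 2dquintic}. One places exactly one factor of $u$ in $L^8_x$ and keeps only one power of the bootstrapped $L^\infty$ bound:
\begin{equation*}
\|e^{i(t-s)\Delta}(|u|^4u)(s)\|_{L^\infty_x}\lesssim (t-s)^{-1}\|u(s)\|_{H^3}^{3}\,\|u(s)\|_{L^8_x}\,\|u(s)\|_{L^\infty_x}\lesssim C_{u_0}M_1^3 (t-s)^{-1}s^{-1}\|u(s)\|_{L^8_x}.
\end{equation*}
Splitting at $t/2$ and applying H\"older in time with exponents $8$ and $8/7$ gives $\|F_2(t)\|_{L^\infty_x}\lesssim C_{u_0}M_1^3 t^{-1}\bigl(\int_M^\infty s^{-8/7}ds\bigr)^{7/8}\|u\|_{L^8_{t,x}}$, which is linear in $C_{u_0}$ with a prefactor that tends to $0$ as $M\to\infty$ and depends only on $M_1$ and $\|u\|_{L^8_{t,x}}$; see \eqref{eq: choiceofM3}. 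This is the step you need to replace in your argument. (Minor additional point: for $F_3$ you would also need the two-dimensional analogue of Lemma \ref{lem: ele}, which reads $\|f\|_{L^\infty}\lesssim (a^2b^2)^{1/4}$, and the exponents in the $L^2$ estimate must be rearranged so that the final power of $C_{u_0}$ is exactly one.)
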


Now we turn to the proof of Lemma \ref{lem: bootstrap3}.

Firstly, by Duhamel's Formula, we write the nonlinear solution $u(t,x)$ to \eqref{eq: 2dquintic} as follows,
\begin{equation}
    u(t,x)=e^{it\Delta}u_0+i\int_0^t e^{i(t-s)\Delta}(|u|^4u)(s) ds=u_l+u_{nl}.
\end{equation}
Clearly dispersive estimate gives for some constant $C_{0}$,
\begin{equation}
\|u_{l}(t)\|_{L_{t}^{\infty}}\leq C_{0}t^{-1}\|u_{0}\|_{L_{x}^{1}}.
\end{equation} 
Then, we split $u_{nl}$ into 
\begin{equation}
u_{nl}=F_{1}+F_{2}+F_{3}
\end{equation}
where
\begin{equation}
\begin{aligned}
    &F_1(t)=i\int_0^M e^{i(t-s)\Delta}(|u|^4u)(s) ds,\\
    &F_2(t)=i\int_M^{t-M} e^{i(t-s)\Delta}(|u|^4u)(s) ds,\\
    &F_3(t)=i\int_{t-M}^t e^{i(t-s)\Delta}(|u|^4u)(s) ds.\\
    \end{aligned}
\end{equation}
The process of handling the $F_1$ term and the $F_3$ term are quite similar to the 3d quintic model so let's focus on the $F_2$ term. We recall the following fact according to Proposition \ref{prop: 2dquintic}.
\begin{equation}
    \int_0^{+\infty} \|u(s)\|^{8}_{L^8_{x}} ds<\infty.
\end{equation}
We start with a pointwise estimate applying the bootstrap assumption, together with the H\"older and the Sobolev inequality as follows 
\begin{equation}\label{eq: pt3}
\|e^{i(t-s)\Delta}|u(s)|^{4}u(s)\|_{L_{x}^{\infty}}\lesssim (t-s)^{-1}\|u(s)\|_{H^{3}}^{3} \|u(s)\|_{L_{x}^{8}} \|u(s)\|_{L_{x}^{\infty}}\lesssim C_{u_{0}}M_{1}^{3}(t-s)^{-1}s^{-1}\|u(s)\|_{L_{x}^{8}}.
\end{equation}
And one estimate $F_{2}$ via 
\begin{equation}\label{eq: esf2pre3}
\aligned
\|F_{2}(t)\|_{L^{\infty}_x} &\leq CC_{u_{0}}M_{1}^{3}\int_{M}^{t-M}(t-s)^{-1}s^{-1} \|u(s)\|_{L_{x}^{8}} ds\\
&\leq CC_{u_{0}}M_{1}^{3}\int_{M}^{\frac{t}{2}}(t-s)^{-1}s^{-1} \|u(s)\|_{L_{x}^{8}} ds\\
&+CC_{u_{0}}M_{1}^{3}\int_{\frac{t}{2}}^{t-M}(t-s)^{-1}s^{-1} \|u(s)\|_{L_{x}^{8}} ds \\
&\leq 2CC_{u_{0}}M_{1}^{3} t^{-1}\int_{M}^{\frac{t}{2}}s^{-1} \|u(s)\|_{L_{x}^{8}} ds \\
&+ 2CC_{u_{0}}M_{1}^{3} t^{-1} \int_{\frac{t}{2}}^{t-M}(t-s)^{-1} \|u(s)\|_{L_{x}^{8}} ds \\
&\leq 2CC_{u_{0}}M_{1}^{3} t^{-1}\big(\int_{M}^{\frac{t}{2}}s^{-\frac{8}{7}}ds\big)^{\frac{7}{8}} \cdot \|u(s)\|_{L_{t,x}^{8}}  \\
&+ 2CC_{u_{0}}M_{1}^{3} t^{-1} \big(\int_{\frac{t}{2}}^{t-M}(t-s)^{-\frac{8}{7}} ds\big)^{\frac{7}{8}} \cdot \|u(s)\|_{L_{t,x}^{8}}. \\
\endaligned
\end{equation}
Now, choosing $M$, so that
\begin{equation}\label{eq: choiceofM3}
4CC_{u_{0}}M_{1}^{3} t^{-1} \big(\int_{M}^{\infty}s^{-\frac{8}{7}} ds\big)^{\frac{7}{8}} \cdot \|u(s)\|_{L_{t,x}^{8}} \leq \frac{1}{10}C_{u_{0}}t^{-1}.
\end{equation}
Thus eventually we can estimate $F_{2}$ as
\begin{equation}
\|F_{2}(t)\|_{L^{\infty}_x} \leq \frac{1}{10}C_{u_{0}}t^{-1}.
\end{equation}

 \section{Appendix}
We record the proof for Lemma \ref{lem: ele} and Lemma \ref{lem: ele2} in this section.

\begin{proof}[Proof of Lemma \ref{lem: ele}]
Indeed, assume $|f(x_{0})|=c> 0$, for $|x-x_{0}|\sim \frac{1}{100}\frac{c}{b}$, one still has $|f(x)|\sim c$ since $\|\nabla f(x)\|_{L^{\infty}}\leq \|f\|_{H^{3}}\leq b$. Thus, one has 
\begin{equation}
c^{2}\frac{c^{3}}{b^{3}}\lesssim \|f\|_{L_{x}^{2}}^{2}\leq a^{2}.
\end{equation}
And the desired estimate follows.
\end{proof}
\begin{proof}[Proof of Lemma \ref{lem: ele2}]
Apply Lemma \ref{lem: ele}, one derive
\begin{equation}
\|\nabla f\|_{L^{\infty}}\leq a_{2}^{2/5}b^{3/5}.
\end{equation}
Now, following the proof of Lemma \ref{lem: ele2}, one derives
\begin{equation}
\|f\|_{L^{\infty}}\lesssim a_{1}^{2/5}(a_{2}^{2/5}b^{3/5})^{3/5}=a_{1}^{2/5}a_{2}^{6/25}b^{9/25}.
\end{equation}
\end{proof}

\bibliographystyle{amsplain}
\bibliographystyle{plain}
\bibliography{BG}

\providecommand{\bysame}{\leavevmode\hbox to3em{\hrulefill}\thinspace}
\providecommand{\MR}{\relax\ifhmode\unskip\space\fi MR }
% \MRhref is called by the amsart/book/proc definition of \MR.
\providecommand{\MRhref}[2]{%
  \href{http://www.ams.org/mathscinet-getitem?mr=#1}{#2}
}
\providecommand{\href}[2]{#2}
\begin{thebibliography}{10}

\bibitem{cazenave2003semilinear}
Thierry Cazenave, \emph{Semilinear {S}chr{\"o}dinger equations}, vol.~10,
  American Mathematical Soc., 2003.

\bibitem{colliander2008global}
James Colliander, Markus Keel, Gigiola Staffilani, Hideo Takaoka, and Terence
  Tao, \emph{Global well-posedness and scattering for the energy-critical
  nonlinear {S}chr{\"o}dinger equation in {$R^3$}}, Annals of Mathematics
  (2008), 767--865.

\bibitem{colliander2004global}
James Colliander, Markus Keel, Gigliola Staffilani, Hideo Takaoka, and Terence
  Tao, \emph{Global existence and scattering for rough solutions of a nonlinear
  schr{\"o}dinger equation on {$R^{3}$}}, Communications on Pure and Applied
  Mathematics: A Journal Issued by the Courant Institute of Mathematical
  Sciences \textbf{57} (2004), no.~8, 987--1014.

\bibitem{dodson2020global}
Benjamin Dodson, \emph{Global well-posedness for the defocusing, cubic
  nonlinear schr $\{$$\backslash$" o$\}$ dinger equation with initial data in a
  critical space}, arXiv preprint arXiv:2004.09618 (2020).

\bibitem{10.1093/imrn/rnn135}
Pierre Germain, Nader Masmoudi, and Jalal Shatah, \emph{{Global Solutions for
  3D Quadratic Schrödinger Equations}}, International Mathematics Research
  Notices \textbf{2009} (2008), no.~3, 414--432.

\bibitem{grillakis2013pair}
M~Grillakis and M~Machedon, \emph{Pair excitations and the mean field
  approximation of interacting bosons, i}, Communications in Mathematical
  Physics \textbf{324} (2013), no.~2, 601--636.

\bibitem{hayashi1986c}
Nakao Hayashi and Masayoshi Tsutsumi, \emph{{$L_{t}^{\infty}C(R^{n})$}-decay of
  classical solutions for nonlinear schr{\"o}dinger equations}, Proceedings of
  the Royal Society of Edinburgh Section A: Mathematics \textbf{104} (1986),
  no.~3-4, 309--327.

\bibitem{journe1991decay}
J-L Journ{\'e}, Avi Soffer, and Christopher~D Sogge, \emph{Decay estimates for
  schr{\"o}dinger operators}, Communications on Pure and Applied mathematics
  \textbf{44} (1991), no.~5, 573--604.

\bibitem{kenig2010scattering}
Carlos Kenig and Frank Merle, \emph{Scattering for {$H^{1/2}$} bounded
  solutions to the cubic, defocusing nls in 3 dimensions}, Transactions of the
  American Mathematical Society \textbf{362} (2010), no.~4, 1937--1962.

\bibitem{klainerman1985uniform}
Sergiu Klainerman, \emph{Uniform decay estimates and the lorentz invariance of
  the classical wave equation}, CPAM \textbf{38} (1985), 321--332.

\bibitem{klainerman1983global}
Sergiu Klainerman and Gustavo Ponce, \emph{Global, small amplitude solutions to
  nonlinear evolution equations}, Communications on Pure and Applied
  Mathematics \textbf{36} (1983), no.~1, 133--141.

\bibitem{lin1978decay}
Jeng-Eng Lin and Walter~A Strauss, \emph{Decay and scattering of solutions of a
  nonlinear schr{\"o}dinger equation}, Journal of Functional Analysis
  \textbf{30} (1978), no.~2, 245--263.

\bibitem{planchon2009bilinear}
Fabrice Planchon and Luis Vega, \emph{Bilinear virial identities and
  applications}, Annales scientifiques de l'Ecole normale sup{\'e}rieure,
  vol.~42, 2009, pp.~261--290.

\bibitem{shatah1982global}
Jalal Shatah, \emph{Global existence of small solutions to nonlinear evolution
  equations}, Journal of Differential equations \textbf{46} (1982), no.~3,
  409--425.

\bibitem{tao2006nonlinear}
Terence Tao, \emph{Nonlinear dispersive equations: local and global analysis},
  vol. 106, American Mathematical Soc., 2006.

\bibitem{yu2018global}
Xueying Yu, \emph{Global well-posedness and scattering for the defocusing
  {$\dot{H}^{1/2}$}-critical nonlinear schrodinger equation in {$R^{2}$}},
  arXiv preprint arXiv:1805.03230 (2018).

\end{thebibliography}

\end{document}